\newtheorem{theorem}{Theorem}[section]
\newtheorem{corollary}{Corollary}[section]
\newtheorem{conjecture}{Conjecture}[section]
\theoremstyle{definition}
\theoremstyle{remark}
\title{$5$-rank of ambiguous class groups of\\ quintic Kummer extensions}
\author{Fouad ELMOUHIB \and Mohamed TALBI \and Abdelmalek AZIZI}
\date{}
\begin{document}

\maketitle

\begin{abstract}
Let $k \,=\, \mathbb{Q}(\sqrt[5]{n},\zeta_5)$, where $n$ is a positive integer $5^{th}$ power-free, whose $5-$class group denoted $C_{k,5}$ is isomorphic to $\mathbb{Z}/5\mathbb{Z}\times\mathbb{Z}/5\mathbb{Z}$. Let $k_0\,=\,\mathbb{Q}(\zeta_5)$ be the cyclotomic field containing  a primitive  $5^{th}$ root of unity $\zeta_5$. Let $C_{k,5}^{(\sigma)}$ the group of the ambiguous classes under the action of $Gal(k/k_0)$ = $\langle\sigma\rangle$. The aim of this paper is to determine  all integers $n$ such that the group of ambiguous classes $C_{k,5}^{(\sigma)}$ has rank $1$ or $2$.
\end{abstract}

\section{Introduction}\label{section-introduction}
One of the most important problems in number theory is the determination of the structure of class group of a number field, particularly its rank. The case of quadratic fields, Gauss's genus theory, determines the rank of $2$-class group. In a series of papers ( see [\ref{GER1}], [\ref{GER2}], [\ref{GER3}]), Frank Gerth III proved several results on the $3$-class groups of pure cubic extension of $\mathbb{Q}$ and cyclic cubic extension of $\mathbb{Q}$. Recently, S.Aouissi, M.C.Ismaili, M.Talbi, A.Azizi in [\ref{SIH}] had classified all fields $\mathbb{Q}(\sqrt[3]{n},\zeta_3)$ whose $3-$class group is of type $(9,3)$. \\
Let $k \,=\, \mathbb{Q}(\sqrt[5]{n},\zeta_5)$, a number of researchers have studied the $5$-class group $C_{k,5}$. M.Kulkarni, D.Majumdar and B.Sury in [\ref{Mani}] proved some results that are seen as generalisation of Gerth's work to the case of any odd prime, and they give more details in case of $5$-class group of $k$. In [\ref{Pa}], C.Parry gives a formula between class numbers of pure quintic field $\mathbb{Q}(\sqrt[5]{n})$ and its normal closure $k$. In \ref{Kobaya} H.Kobayashi proved that if the radicand $n$ has a prime factor $p$ congruent to $-1$ modulo $5$, then the class number of the pure quintic field $\Gamma\,=\,\mathbb{Q}(\sqrt[5]{n})$ is a multiple of $5$, and the class number of $k$ is multiple of $25$. \\
Let $n>1$ be a $5^{th}$ power-free integer and
$k\,=\,\mathbb{Q}(\sqrt[5]{n},\zeta_5)$ be a quintic Kummer extension of the cyclotomic field $k_0\,=\,\mathbb{Q}(\zeta_5)$. By $C_{k,5}^{(\sigma)}$ we denote the $5$-group of ambiguous ideal classes under the action of $Gal(k/k_0)$ = $\langle\sigma\rangle$, i.e $C_{k,5}^{(\sigma)}\,=\, \{\mathcal{A}\,\in\,C_{k,5}|\, \mathcal{A}^\sigma\,=\,\mathcal{A}\}$. Let $k^{\ast}\,=\,(k/k_0)^{\ast}$ be the maximal abelian extension of $k_0$ contained in the Hilbert $5$-class field $k_5^{(1)}$ of $k$,
which is called the relative $5$-genus field of $k/k_0$.

We consider the problem of finding the radicands \(n\) 
of all pure quintic fields $\Gamma\,=\,\mathbb{Q}(\sqrt[5]{n})$, for which the Galois group $\operatorname{Gal}(k^{\ast}/k)$ is non-trivial. The present work gives the complete solution of this problem
by characterizing all quintic Kummer extensions $k/k_0$ with  $5$-group of ambiguous ideal classes $C_{k,5}^{(\sigma)}$ of order $5$ or $25$. This paper can be viewed as the continuation of the work of M.Kulkarni, D.Majumdar and B.Sury in [\ref{Mani}]. \\
In fact, we shall prove the following Main Theorem:
%--------------------------------------------------------------------------------
%\newpage
\begin{theorem}
\label{thm:Rank1}
Let $\Gamma\,=\,\mathbb{Q}(\sqrt[5]{n})$ be a pure quintic field, where $n>1$ is a $5^{th}$ power-free integer, and $k\,=\,\mathbb{Q}(\sqrt[5]{n},\zeta_5)$ be its normal closure. We assume that the $5-$class group $C_{k,5}$ is of type $(5,5)$.\\
(1) If rank $(C_{k,5}^{(\sigma)})\,=\,1$, then the integer $n$ can be written in one of the following forms:
\begin{equation}
\label{eqn:Rank1}
 n\,=\,\left\lbrace
   \begin{array}{ll}
   
  5^{e}q_1^{2}q_2 \not \equiv \, \pm1\pm7\, (\mathrm{mod}\, 25) & \text{ with } \quad q_1 \,\text{ or }\, q_2$  $\not\equiv \,\pm7\, (\mathrm{mod}\, 25)\\
   
   5^ep \not\equiv \, \pm1,\pm7\, (\mathrm{mod}\,25) & \text{ with } \quad p\, \not\equiv \, -1\,(\mathrm{mod}\,25)\\
   
   5^{e}q_1 \not\equiv \, \pm1\pm7\, (\mathrm{mod}\, 25)& \text{ with }\quad q_1\,\equiv \,\pm7\, (\mathrm{mod}\, 25)\\
   
   p^{e}q_1 \, \equiv \, \pm1,\pm7\, (\mathrm{mod}\,25) & \text{ with } \quad p\, \not\equiv \, -1\,(\mathrm{mod}\,25),\, q_1\, \not\equiv \, \pm7\,(\mathrm{mod}\,25)\\

   p^{e} \, \equiv \, \pm1,\pm7\, (\mathrm{mod}\,25) &  \text{ with } \quad p\, \equiv \, -1\,(\mathrm{mod}\,25)\\
  
   q_1^{e_1}q_2\,\equiv \, \pm1\pm7\, (\mathrm{mod}\, 25) & \text{ with } \quad q_i\,\equiv \,\pm7\, (\mathrm{mod}\, 25)
   
   \end{array}
   \right.
\end{equation}
where $p\,\equiv \, -1\, (\mathrm{mod}\, 5)$ and $q_1,q_2\,\equiv \, \pm2\, (\mathrm{mod}\, 5)$ are primes and
$e, e_1$ are integers in $\{1,2,3,4\}$.\\\\
(2) If rank $(C_{k,5}^{(\sigma)})\,=\,2$, then the integer $n$ can be written in one of the following forms:
\begin{equation}
\label{eqn:Rank1}
 n\,=\,\left\lbrace
   \begin{array}{ll}
   5^el \not \equiv \, \pm1,\pm7\, (\mathrm{mod}\,25) & \quad \text{ with } \quad l\, \not\equiv \, 1\,(\mathrm{mod}\,25),\\

   l^{e_1}q_1 \, \equiv \, \pm1,\pm7\, (\mathrm{mod}\,25) & \quad \text{ with } \quad l\, \equiv \, 1\,(\mathrm{mod}\,5),\, q_1\, \equiv \, \pm2,\pm7,\pm3\,(\mathrm{mod}\,25)\\
   
   l^{e_1} \, \equiv \, \pm1,\pm7\, (\mathrm{mod}\,25) & \quad \text{ with } \quad l\, \equiv \, 1\,(\mathrm{mod}\,25),\\
   
   \end{array}
   \right.
\end{equation}
where $l\,\equiv \, 1\, (\mathrm{mod}\, 5)$ and  $q_1\,\equiv \, \pm2\, (\mathrm{mod}\, 5)$ are primes and
$e, e_1$ are integers in $\{1,2,3,4\}$.
\end{theorem}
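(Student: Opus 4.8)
The plan is to translate the rank condition on $C_{k,5}^{(\sigma)}$ into an explicit arithmetic condition on $n$ via the Chevalley–type ambiguous class number formula for the cyclic extension $k/k_0$. Since $k_0=\mathbb{Q}(\zeta_5)$ has class number $1$ and a single fundamental unit (up to roots of unity), the formula for the order of $C_{k,5}^{(\sigma)}$ reads
\begin{equation*}
\#C_{k,5}^{(\sigma)}\,=\,\frac{\prod_{\mathfrak{p}}e(\mathfrak{p})}{[k:k_0]\,\cdot\,[E_{k_0}:E_{k_0}\cap N_{k/k_0}(k^{\times})]},
\end{equation*}
where the product runs over the places $\mathfrak{p}$ of $k_0$ (finite and infinite) ramified in $k/k_0$ and $e(\mathfrak{p})$ is the corresponding ramification index. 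So the first step is to determine, as a function of the prime factorization of $n$ and of $n\bmod 25$, exactly which primes of $k_0$ ramify in $k=k_0(\sqrt[5]{n})$ and with what index; this is a Kummer-theory computation governed by whether each rational prime $p\mid n$ splits, is inert, or ramifies in $k_0/\mathbb{Q}$ (i.e. by $p\bmod 5$), together with the behaviour of the prime above $5$ (controlled by $n\bmod 25$), and the ramification at infinity. Counting these gives $\prod_\mathfrak{p}e(\mathfrak{p})=5^{t}$ for an explicit $t=t(n)$.

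The second step is to pin down the unit index $q:=[E_{k_0}:E_{k_0}\cap N_{k/k_0}(k^{\times})]\in\{1,5\}$. Here I would use Hasse's norm theorem / the product formula for norm residue symbols: the fundamental unit $\varepsilon_0$ of $k_0$ is a norm from $k$ if and only if it is a local norm everywhere, and the only possibly-obstructing places are those ramified in $k/k_0$. This reduces $q$ to the computation of finitely many Hilbert symbols $(\varepsilon_0,n)_\mathfrak{p}$, which again depends only on congruence conditions on the prime factors of $n$ modulo $5$ and on $n\bmod 25$. Combining the two steps yields $\operatorname{rank}(C_{k,5}^{(\sigma)})=t(n)-1-v$, where $5^{v}=q$; one then reads off that this rank equals $1$ or $2$ precisely in the listed cases. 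Because we have imposed $C_{k,5}\cong(5,5)$, the rank is automatically $\le 2$, which both constrains $n$ a priori and gives a consistency check on the case list.

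The main obstacle, and the step requiring the most care, is the unit-index computation and its interaction with the split/inert dichotomy at the primes dividing $n$: the value of the norm-residue symbol at a prime $\mathfrak{p}\mid p$ of $k_0$ genuinely depends on $p\bmod 25$ (not just $p\bmod 5$) and on whether $p\equiv -1\pmod{25}$, which is exactly why the statement bifurcates on conditions like ``$p\not\equiv-1\ (\mathrm{mod}\ 25)$'' and ``$q_i\equiv\pm7\ (\mathrm{mod}\ 25)$''. I would organize the proof as a finite case analysis on (a) the number and congruence classes mod $5$ of the prime divisors of $n$ (distinguishing $p\equiv-1$, $q\equiv\pm2$, $l\equiv1\pmod 5$, and whether $5\mid n$), and (b) the refined class of $n$ modulo $25$ together with the refined classes of the relevant prime factors, carefully tracking in each branch the exponent $t(n)$ from ramification and the symbol $q$ from the unit, and checking that the resulting rank matches the tabulated forms in \eqref{eqn:Rank1}. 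Routine but lengthy Hilbert-symbol evaluations over $\mathbb{Q}_5(\zeta_5)$ are deferred to lemmas.
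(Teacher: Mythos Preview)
Your overall strategy---the Chevalley ambiguous class number formula for the cyclic degree-$5$ extension $k/k_0$, followed by a case analysis on the splitting of the primes dividing $n$---is exactly the route the paper takes. However, there is a genuine gap in your formulation of the unit index.

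You assert that $q=[E_{k_0}:E_{k_0}\cap N_{k/k_0}(k^{\times})]\in\{1,5\}$ on the grounds that $k_0=\mathbb{Q}(\zeta_5)$ has a single fundamental unit. But $E_{k_0}\cong \mu_{10}\times\mathbb{Z}$, so $E_{k_0}/E_{k_0}^5\cong(\mathbb{Z}/5\mathbb{Z})^2$: the root of unity $\zeta_5$ contributes a second independent class modulo fifth powers. Consequently $q\in\{1,5,25\}$, and one must test \emph{both} $\zeta_5$ and a fundamental unit (the paper uses $1+\zeta_5$) for being norms. This is precisely why the paper's invariant $q^*$ takes three values $0,1,2$ and why the rank formula reads $d-3+q^*$ rather than $d-1-v$ with $v\in\{0,1\}$. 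With your version you would miss the case $q^*=0$ entirely; for rank $2$ that case produces two of the three listed forms ($5^e l$ and $l^{e_1}q_1$), so your analysis would be incomplete.

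There is a second ingredient you have not accounted for. The paper proves separately (its Theorem~\ref{ran2}) that when $\operatorname{rank}C_{k,5}^{(\sigma)}=2$ and $C_{k,5}$ is of type $(5,5)$, the radicand $n$ must be divisible by a (unique) prime $l\equiv 1\pmod 5$. This is not a consequence of the ambiguous class formula alone: the argument uses the action of $\tau^2\in\operatorname{Gal}(k/\Gamma)$ and the decomposition of $C_{k,5}$ into $\pm$-eigenspaces, together with the fact that the strongly ambiguous classes (generated by the ramified primes) are all $\tau^2$-fixed when no $l$ divides $n$, forcing $C_{k,5}^{-}=1$ and contradicting $|C_{k,5}^{-}|=5$. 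Your proposal uses the $(5,5)$ hypothesis only to bound the rank by $2$; without this extra $\tau$-action argument your case list for rank $2$ would contain spurious shapes such as $p_1^{a}p_2^{b}q$ with $d=5$ and $q^*=0$.
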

This result will be underpinned by numerical examples obtained with the computational number theory system \textbf{PARI/GP} [\ref{PRI}] in section 3.

%--------------------------------------------------------------------------------

%\noindent
%\newpage
\begin{center}

$\textbf{Notations. \ }$
\end{center}
Throughout this paper, we use the following notations:
\begin{itemize}

\item The lower case letter $p$,$q$ and $l$ will denote a prime numbers such that, $p\,\equiv\,-1\, (\mathrm{mod}\, 5)$, $q\,\equiv\,\pm2\, (\mathrm{mod}\, 5)$ and $l\,\equiv\,1\, (\mathrm{mod}\, 5)$.

 \item $\Gamma\,=\,\mathbb{Q}(\sqrt[5]{n})$: a pure quintic field, where $n\neq 1$ is a $5^{th}$ power-free positive integer.
 
 \item $k_0\,=\,\mathbb{Q}(\zeta_5)$, the cyclotomic field, where $\zeta_5\,=\,e^{\frac{2i\pi}{5}}$ a primitive $5^{th}$ root of unity.
 
 \item $k\,=\,\mathbb{Q}(\sqrt[5]{n},\zeta_5)$: the normal closure of $\Gamma$, a quintic Kummer extension of $k_0$.
 
\item $\Gamma^{'},\,\Gamma^{''},\,\Gamma^{'''},\,\Gamma^{''''},\,$ the four conjugates quintic fields of $\Gamma$, contained in $k$. 
 
 \item $\langle\tau\rangle\,=\,\operatorname{Gal}(k/\Gamma)$ such that $\tau^4\,=\,id,\, \tau^3(\zeta_5)\,=\,\zeta_5^3,\, \tau^2(\zeta_5)\,=\,\zeta_5^4,\, \tau(\zeta_5)\,=\,\zeta_5^2$ and $\tau(\sqrt[5]{n})\,=\,\sqrt[5]{n}$.
 
 \item $\langle\sigma\rangle\,=\,\operatorname{Gal}(k/k_0)$ such that $\sigma^5\,=\,id,\ \sigma(\zeta_5)\,=\,\zeta_5$ and $\sigma(\sqrt[5]{n})\,=\,\zeta_5\sqrt[5]{n},\, \sigma^2(\sqrt[5]{n})\,=\,\zeta_5^2\sqrt[5]{n},\,\\\\ \sigma^3(\sqrt[5]{n})\,=\,\zeta_5^3\sqrt[5]{n},\, \sigma^4(\sqrt[5]{n})\,=\,\zeta_5^4\sqrt[5]{n} $.
 
 \item $\lambda\,=\,1-\zeta_5$ is prime element above $5$ of $k_0$.
 
 \item $q^{\ast}\,=\,0,\,1$ or $2$ according to whether $\zeta_{5}$ and $1+\zeta_5$ is not norm or is norm of an element of\\\\ $k^*\,=\,k\setminus\lbrace 0\rbrace$.
 
 \item $d$: the number of prime ideals of $k_{0}$ ramified in $k$.
 
 \item For a number field $L$, denote by:

  \begin{itemize}
   \item $\mathcal{O}_{L}$: the ring of integers of $L$;
   \item $E_{L}$: the group of units of $L$;
   \item $C_{L}$, $h_{L}$, $C_{L,5}$: the class group, class number, and $5$-class group of $L$.
   \item $L_5^{(1)}, L^{\ast}$: the Hilbert $5$-class field of $L$, and the absolute genus field of $L$. 
  \end{itemize}
  
\end{itemize}
\begin{center}
\begin{tikzpicture}
\begin{scope}[xscale=2,yscale=2]
%--------------------------------------------------------------
  
%---------------------------------------------------------
  \node (A) at (0,3) {$\mathbf{k}$};
  \node (B) at (1,2){$\mathbf{\Gamma}$ };
  \node (C) at (1.5,2){$\mathbf{\Gamma'}$ };
  \node (D) at (2,2){$\mathbf{\Gamma''}$ };
  \node (E) at (2.5,2){$\mathbf{\Gamma'''}$ };
  \node (F) at (3,2){$\mathbf{\Gamma''''}$ };  
  \node (G) at (-1,1) {$\mathbf{k_0}$};
  \node (H)   at (0,0) {$\mathbb{Q}$ };
  \node (I)   at (0,-0.5) {\underline{Figure 1}};
 %-------------------------------------------------------------- 
  \draw [-,>=latex] (G) -- (A)   node[midway,above,rotate=40] {5};
  \draw [-,>=latex] (B) -- (A)   node[midway,above,rotate=-40] {4};
  \draw [-,>=latex] (C) -- (A)   node[midway,below left] {};
  \draw [-,>=latex] (D) -- (A)   node[midway,below left] {};
  \draw [-,>=latex] (E) -- (A)   node[midway,below left] {};
  \draw [-,>=latex] (F) -- (A)   node[midway,below left] {};
  \draw [-,>=latex] (H) -- (G)   node[midway,below right] {4};
  \draw [-,>=latex] (H) -- (B)   node[midway,below right] {5};
  \draw [-,>=latex] (H) -- (C)   node[midway,below right] {};
  \draw [-,>=latex] (H) -- (D)   node[midway,below right] {};
  \draw [-,>=latex] (H) -- (E)   node[midway,below right] {};
  \draw [-,>=latex] (H) -- (F)   node[midway,below right] {};
  %--------------------------------------------------------------

\end{scope}

\end{tikzpicture}

\end{center}

%\newpage

%--------------------------------------------------------------------------------

%\section{Preliminary results}

\section{\Large Proof of Main Theorem}
\begin{theorem}{(Decompositon in cyclotomic fields)}\\
Let $m$ a positive integer and $p$ a prime number. Suppose $p$ do not divides $m$, and let $f$ be the smallest positive integer such that $p^f \, \equiv \, 1\, (\mathrm{mod}\, m)$. Then $p$ splits into $\phi(m)/f$ distinct primes in $\mathbb{Q}(\zeta_m)$ each of which has a residue class degree $f$. In particular, p splits completely if and only if $p \, \equiv \, 1\, (\mathrm{mod}\, m)$
\end{theorem}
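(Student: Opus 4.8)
The plan is to run the standard decomposition/inertia machinery for the abelian extension $\mathbb{Q}(\zeta_m)/\mathbb{Q}$, whose Galois group is canonically $G\cong(\mathbb{Z}/m\mathbb{Z})^{\times}$ via $a\mapsto\sigma_a$, where $\sigma_a(\zeta_m)=\zeta_m^{a}$. First I would record that since $p\nmid m$, the prime $p$ is unramified in $\mathbb{Q}(\zeta_m)$: indeed the minimal polynomial $\Phi_m(X)$ of $\zeta_m$ divides $X^m-1$, whose derivative $mX^{m-1}$ is, modulo $p$, a unit times $X^{m-1}$, so $X^m-1$ and hence $\Phi_m(X)$ stay separable modulo $p$. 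Consequently, for every prime $\mathfrak{p}$ of $\mathbb{Z}[\zeta_m]$ above $p$ the ramification index is $1$, and because the extension is Galois all such $\mathfrak{p}$ share one common residue degree $f_0$ and there are exactly $g=\phi(m)/f_0$ of them, by the fundamental identity $efg=\phi(m)$ with $e=1$.

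Next I would identify $f_0$ with the order of $p$ in $(\mathbb{Z}/m\mathbb{Z})^{\times}$. Fix $\mathfrak{p}\mid p$; the decomposition group $D_{\mathfrak{p}}=\{\gamma\in G:\gamma\mathfrak{p}=\mathfrak{p}\}$ has order $f_0$ and, $p$ being unramified, is cyclic, generated by the Frobenius $\varphi_{\mathfrak{p}}$ characterized by $\varphi_{\mathfrak{p}}(x)\equiv x^{p}\ (\mathrm{mod}\ \mathfrak{p})$ for all $x\in\mathbb{Z}[\zeta_m]$. Taking $x=\zeta_m$ gives $\varphi_{\mathfrak{p}}(\zeta_m)\equiv\zeta_m^{p}\ (\mathrm{mod}\ \mathfrak{p})$; writing $\varphi_{\mathfrak{p}}=\sigma_a$, we get $\zeta_m^{a}\equiv\zeta_m^{p}\ (\mathrm{mod}\ \mathfrak{p})$, and since the $m$ powers $1,\zeta_m,\dots,\zeta_m^{m-1}$ stay pairwise incongruent modulo $\mathfrak{p}$ (again because $X^m-1$ is separable mod $p$), we conclude $a\equiv p\ (\mathrm{mod}\ m)$, i.e.\ $\varphi_{\mathfrak{p}}=\sigma_p$. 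Hence $f_0=|D_{\mathfrak{p}}|=\operatorname{ord}(\sigma_p)$, the least $f\geq 1$ with $p^{f}\equiv 1\ (\mathrm{mod}\ m)$.

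Combining the two steps, $p$ splits into $g=\phi(m)/f$ distinct primes, each of residue degree $f$. For the last assertion, $p$ splits completely means $f=1$ and $g=\phi(m)$, which via $f=\operatorname{ord}(\sigma_p)$ is equivalent to $\sigma_p=\mathrm{id}$, i.e.\ $p\equiv 1\ (\mathrm{mod}\ m)$. The only genuinely delicate point is the Frobenius computation: one must ensure the powers of $\zeta_m$ remain distinct in the residue field, which is exactly where the hypothesis $p\nmid m$ enters; the rest is general decomposition theory for Galois extensions. As an alternative route one can bypass the abstract unramifiedness statement by factoring $\Phi_m(X)\bmod p$ directly into distinct irreducible factors all of degree $f=\operatorname{ord}(\sigma_p)$ — a consequence of $\mathbb{F}_p(\zeta_m)=\mathbb{F}_{p^{f}}$ — and lifting this factorization to $\mathbb{Z}[\zeta_m]$ by the Dedekind--Kummer theorem, which applies since $\mathbb{Z}[\zeta_m]$ is the full ring of integers of $\mathbb{Q}(\zeta_m)$.
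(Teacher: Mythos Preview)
Your argument is correct and is the standard decomposition--Frobenius proof one finds in the literature. The paper itself does not give a proof of this theorem at all: it simply refers the reader to Washington's \emph{Introduction to Cyclotomic Fields}, so there is no in-paper argument to compare against, and your write-up is precisely the kind of self-contained proof that reference contains.
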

\begin{proof}
see [\ref{cyc}] page 14.
\end{proof}
\begin{corollary}
\label{deco}
Let $p$ a prime integer, we have :
\item[-] If $p\,=\,5$, then $\lambda\,=\,1-\zeta_5$ is the unique prime over 5 in $\mathbb{Q}(\zeta_5)$.
\item[-] If $l \, \equiv \, 1\, (\mathrm{mod}\, 5)$, then $l$ splits completely in $\mathbb{Q}(\zeta_5)$ as $l\,=\,\pi_1\pi_2\pi_3\pi_4$, with $\pi_i$ are primes in $\mathbb{Q}(\zeta_5)$
\item[-] If $q \, \equiv \, \pm2\,(\mathrm{mod}\, 5)$, then $q$ is inert in $\mathbb{Q}(\zeta_5)$.
\item[-] If $p \, \equiv \, -1\, (\mathrm{mod}\, 5)$, then $p$ splits in $\mathbb{Q}(\zeta_5)$ as $p\,=\,\pi_1\pi_2$, with $\pi_i$ are primes in $\mathbb{Q}(\zeta_5)$.
\end{corollary}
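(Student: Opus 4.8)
The plan is to derive all four cases from Theorem~2.1 applied with $m=5$, after disposing separately of the ramified prime $p=5$, which lies outside the hypothesis $p\nmid m$ of that theorem. Since $\phi(5)=4$, the whole argument reduces to computing, in each case, the multiplicative order $f$ of $p$ modulo $5$ and then reading off the decomposition type as $\phi(5)/f$ distinct primes of residue degree $f$.

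First I would treat $p=5$ by an elementary norm computation. The fifth cyclotomic polynomial $\Phi_5(X)=X^4+X^3+X^2+X+1$ satisfies $\Phi_5(1)=5$, hence $\prod_{i=1}^{4}(1-\zeta_5^{\,i})=5$ in $\mathbb{Z}[\zeta_5]$; for each $i$ with $\gcd(i,5)=1$ the quotient $(1-\zeta_5^{\,i})/(1-\zeta_5)=1+\zeta_5+\cdots+\zeta_5^{\,i-1}$ is a unit, so every factor is an associate of $\lambda=1-\zeta_5$ and therefore $(5)=(\lambda)^4$ as ideals. Because $[\mathbb{Q}(\zeta_5):\mathbb{Q}]=4$, this forces $(\lambda)$ to be prime and to be the only prime above $5$, i.e.\ $5$ is totally ramified with $\lambda$ its unique prime divisor.

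For the unramified primes I would invoke Theorem~2.1 with $m=5$. If $l\equiv 1\pmod 5$ then $f=1$, so $l$ splits into $\phi(5)/1=4$ distinct primes of residue degree $1$, i.e.\ $l$ splits completely, and one writes the factorization as $l=\pi_1\pi_2\pi_3\pi_4$. If $q\equiv\pm2\pmod 5$, then since $2$ and $3$ both have order $4$ in $(\mathbb{Z}/5\mathbb{Z})^\times$ we get $f=4$, hence $\phi(5)/4=1$ prime of residue degree $4$, so $q$ is inert. If $p\equiv -1\pmod 5$, then $p^2\equiv 1$ while $p\not\equiv 1\pmod 5$, so $f=2$ and $p$ splits into $\phi(5)/2=2$ distinct primes of residue degree $2$, giving $p=\pi_1\pi_2$. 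I do not expect a genuine obstacle: the only subtlety is that Theorem~2.1 does not cover $p\mid m$, so the prime $5$ must be handled by the norm argument above rather than by quoting the theorem; the rest is just bookkeeping of the orders $1,4,4,2$ of the classes $1,2,3,4$ in $(\mathbb{Z}/5\mathbb{Z})^\times$.
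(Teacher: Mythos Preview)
Your proof is correct and follows exactly the approach the paper intends: the corollary is stated without proof as an immediate consequence of Theorem~2.1, and you have simply filled in the routine details of computing the order $f$ of each residue class in $(\mathbb{Z}/5\mathbb{Z})^\times$. Your separate treatment of $p=5$ via the factorization $\Phi_5(1)=5$ is the standard argument and is needed since Theorem~2.1 excludes primes dividing $m$; the paper implicitly relies on this well-known fact about ramification of $p$ in $\mathbb{Q}(\zeta_p)$ without spelling it out.
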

Before proving the main theorem, we give a proof of the existance of a unique prime $l \, \equiv \, 1\,(\mathrm{mod}\,5)$ divides the radicand $n$ in the case of rank $(C_{k,5}^{(\sigma)})\,=\,2$
\begin{theorem}
\label{ran2} 
If rank $C_{k,5}^{(\sigma)} = 2$, so  $C_{k,5}\,=\,C_{k,5}^{(\sigma)}$, and there is unique prime $l \, \equiv \, 1\,(\mathrm{mod}\,5)$ that divides the radicand  $n$. Furthermore, we have $(k/k_0)^*\,=\, k_5^{(1)}$.
\end{theorem}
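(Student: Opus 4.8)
The plan is to settle the three assertions in turn; the first two are formal, and the arithmetic content lies in the statement about the prime $l$.

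\smallskip
\emph{Equality of the class group with its ambiguous subgroup, and of the genus field with $k_5^{(1)}$.} Since $C_{k,5}$ is assumed of type $(5,5)$ it has order $25$, and a subgroup of rank $2$ has order divisible by $25$; as $C_{k,5}^{(\sigma)}\subseteq C_{k,5}$ has rank $2$ by hypothesis, it follows that $C_{k,5}^{(\sigma)}=C_{k,5}$, i.e. $\sigma$ acts trivially on $C_{k,5}\cong\operatorname{Gal}(k_5^{(1)}/k)$. The Hilbert $5$–class field $k_5^{(1)}$ is Galois over $k_0$ (it is canonically attached to $k$, and $k/k_0$ is Galois), and in the exact sequence
\[
1\to\operatorname{Gal}(k_5^{(1)}/k)\to\operatorname{Gal}(k_5^{(1)}/k_0)\to\operatorname{Gal}(k/k_0)\to1
\]
the conjugation action of $\langle\sigma\rangle$ on $\operatorname{Gal}(k_5^{(1)}/k)$ is trivial; hence $\operatorname{Gal}(k_5^{(1)}/k_0)$ is a central extension of the cyclic group $\langle\sigma\rangle$ by $C_{k,5}$, and a group having a cyclic quotient by a central subgroup is abelian. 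Thus $k_5^{(1)}/k_0$ is abelian, so $k_5^{(1)}\subseteq(k/k_0)^{*}$ by maximality, while the reverse inclusion is the definition of $(k/k_0)^{*}$; therefore $(k/k_0)^{*}=k_5^{(1)}$.

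\smallskip
\emph{At most one prime $l\equiv 1\ (\mathrm{mod}\ 5)$ divides $n$.} Let $d$ be the number of primes of $k_0$ ramified in $k/k_0$ and let $q^{*}\in\{0,1,2\}$ be as in the Notations. Chevalley's ambiguous class number formula for the cyclic degree–$5$ extension $k/k_0$ — with $h_{k_0}=1$ and all archimedean places unramified — gives $\lvert C_{k,5}^{(\sigma)}\rvert=5^{\,d-3+q^{*}}$, and since $C_{k,5}^{(\sigma)}$ is here elementary abelian, $\operatorname{rank}C_{k,5}^{(\sigma)}=2$ forces $d+q^{*}=5$, in particular $d\le 5$. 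Writing $n=5^{e_0}\prod_i l_i^{a_i}\prod_j q_j^{b_j}\prod_m p_m^{c_m}$ with $l_i\equiv 1$, $q_j\equiv\pm2$, $p_m\equiv-1\ (\mathrm{mod}\ 5)$ primes, Corollary~\ref{deco} gives $d=4t_1+t_2+2t_3+\delta$, where $t_1=\#\{l_i\}$, $t_2=\#\{q_j\}$, $t_3=\#\{p_m\}$ and $\delta\in\{0,1\}$ records whether $\lambda$ ramifies in $k$: each $l_i$ splits completely in $k_0$ and all four primes above it ramify in $k$. Since $4t_1\le d\le5$, we conclude $t_1\le 1$.

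\smallskip
\emph{At least one such prime; the main obstacle.} Suppose for contradiction that $t_1=0$. First note the mechanism that a prime $l\equiv1\ (\mathrm{mod}\ 5)$ dividing $n$ provides: the cyclic quintic subfield $F_l\subset\mathbb{Q}(\zeta_l)$ yields $k\cdot F_l/k$ unramified everywhere — at a prime above $l$ the local extensions $F_l$ and $k_0(\sqrt[5]{n})$ cut out the same ramified $\mathbb{Z}/5\mathbb{Z}$–extension of $\mathbb{Q}_l$ up to an unramified twist, while $F_l$ is unramified at every other place — and abelian over $k_0$, hence $F_lk\subseteq(k/k_0)^{*}=k_5^{(1)}$; it is precisely such extensions that carry the ambiguous $5$–rank. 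When $t_1=0$ the only candidate generators of the degree–$25$ extension $(k/k_0)^{*}/k$ are fifth roots of the prime–power parts of $n$ and the unit radicals attached to $q^{*}$, subject to the relation $\sqrt[5]{n}\in k$ and to the prime–by–prime condition — controlled by the residues of the $q_j,p_m$ modulo $25$ and by the congruence for $\lambda$ to be unramified — that each radical genuinely gives an unramified extension of $k$. I would run through the finitely many factorization types compatible with $d\in\{3,4,5\}$ and $d+q^{*}=5$, and combine this with the rank estimates for $C_{k,5}$ from [\ref{Mani}], to show that $t_1=0$ forces $\operatorname{rank}C_{k,5}\ge 3$, contradicting the type–$(5,5)$ hypothesis; hence $t_1=1$. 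The main obstacle is exactly this final exclusion: Chevalley's formula alone permits $t_1=0$ (e.g. with several factors $\equiv\pm2$ or $\equiv-1\ (\mathrm{mod}\ 5)$ realising $d\in\{3,4,5\}$), so one genuinely needs the fine Kummer–theoretic structure of $k_5^{(1)}$ over $k_0$ — a careful accounting of which radicals $\sqrt[5]{q_j},\sqrt[5]{p_m}$ and which unit radicals are unramified over $k$ at $\lambda$ — together with the rigidity coming from $C_{k,5}$ being of type \emph{exactly} $(5,5)$.
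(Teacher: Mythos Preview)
Your treatment of the first two assertions — that $C_{k,5}=C_{k,5}^{(\sigma)}$, that $(k/k_0)^{*}=k_5^{(1)}$, and that at most one prime $l\equiv 1\pmod 5$ can divide $n$ — is correct and matches the paper's argument (your central–extension justification for $(k/k_0)^{*}=k_5^{(1)}$ is in fact cleaner than the paper's brief appeal to the correspondence between $C_{k,5}^{1-\sigma}$ and the genus field).

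The genuine gap is the existence of a prime $l\equiv 1\pmod 5$ dividing $n$. You acknowledge this is ``the main obstacle'' and propose to exclude $t_1=0$ either by Kummer-theoretic bookkeeping on which radicals give unramified extensions at $\lambda$, or by invoking rank estimates from [\ref{Mani}] to force $\operatorname{rank}C_{k,5}\ge 3$. Neither route is carried out, and the second is not what actually happens: there is no general lower bound in [\ref{Mani}] that turns $t_1=0$ together with $d+q^{*}=5$ into $\operatorname{rank}C_{k,5}\ge 3$. The ambiguous-class formula by itself is perfectly happy with, say, $n=5^{e}p_1p_2$ or $n=5^{e}q_1q_2q_3$ realising $d=5$, $q^{*}=0$, and it says nothing about the full rank of $C_{k,5}$.

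The paper's argument uses a structural ingredient you do not mention: the action of $\tau^2\in\operatorname{Gal}(k/\Gamma)$, of order $2$, on the $5$-group $C_{k,5}$. This gives a decomposition $C_{k,5}=C_{k,5}^{+}\oplus C_{k,5}^{-}$ into $\tau^2$-eigenspaces, with $C_{k,5}^{+}\simeq C_{\Gamma,5}$ and both factors nontrivial under the type-$(5,5)$ hypothesis. The point is that when no prime $\equiv 1\pmod 5$ divides $n$, every ramified prime of $k/k_0$ lies over a rational prime that is inert or splits into two primes in $k_0$, and each such prime of $k$ is \emph{fixed} by $\tau^2$. Hence the strongly ambiguous classes all sit in $C_{k,5}^{+}$, and one checks (in the two cases $C_{k,s}^{(\sigma)}=C_{k,5}$ and $C_{k,s}^{(\sigma)}\subsetneq C_{k,5}$) that this forces all of $C_{k,5}$ into the $+$-part, so $C_{k,5}^{-}=\{1\}$ — contradicting $|C_{k,5}^{-}|=5$. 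The missing idea is precisely this $\tau^2$-eigenspace analysis; your Kummer-theoretic sketch does not substitute for it.
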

\begin{proof}
If rank $(C_{k,5}^{(\sigma)}) = 2$ so the order of $C_{k,5}^{(\sigma)}$ is at least $25$, since $C_{k,5}^{(\sigma)} \subseteq C_{k,5}$ and $|C_{k,5}|\,=\,25$ because $C_{k,5}$ is of type $(5,5)$ we have $C_{k,5}\,=\,C_{k,5}^{(\sigma)}$ it means that all ideal classes are ambiguous.\\
By class fields theory $C_{k,5}^{1-\sigma}$ correspond to $(k/k_0)^*$ and $Gal(k_5^{(1)}/k)\cong C_{k,5}$. Since $C_{k,5}^{(\sigma)}\,=\,C_{k,5}$, we get $C_{k,5}^{1-\sigma}\,=\,\{1\}$, hence $(k/k_0)^*\,=\, k_5^{(1)}$, and by [\ref{Mani}, proposition 5.8] we know explicitly in this case the Hilbert $5-$class field of $k$.\\
We assume now that there is no prime $l \, \equiv \, 1\,(\mathrm{mod}\,5)$ divides $n$.\\ We can write $n$ as $n\,=\,5^eq_1^{f_1}...q_r^{f_r}p_1^{g_1}....p_s^{g_s}$ with $q_i\, \equiv \, \pm2 \, (mod\, 5)$ and $p_j\, \equiv \, -1\, (mod\, 5)$,\\ $f_i\,=\,1,2,3,4$ for $1\leq i\leq r$ and $g_j\,=\,1,2,3,4$ for $1\leq j\leq s$, and $e\,=\,0,1,2,3,4$, by Corollary \ref{deco} each $q_i$ is inert in $k_0$, and $q_i$ is ramified in $\Gamma\,=\,\mathbb{Q}(\sqrt[5]{n})$, also by Corollary \ref{deco} $p_j$ splits in $k_0$ as $p_j\,=\,\pi_1\pi_2$, where $\pi_1,\pi_2$ are primes in $k_0$, and $p_j$ is ramified in $\Gamma\,=\,\mathbb{Q}(\sqrt[5]{n})$, so the prime ideals ramified in $k/k_0$ are those above $q_i$ and $\pi_j$ and the ideal above $\lambda$ with $\lambda\,=\,1-\zeta_5$ if 5 is ramified in $\Gamma\,=\,\mathbb{Q}(\sqrt[5]{n})$.\\
If $\lambda$ is ramified in $k/k_0$, we note $\mathcal{I}$ the prime ideal in $k$ above $\lambda$, and for $1\leq i\leq r$ ,$\mathcal{Q}_i$ the prime ideal above $q_i$ in $k$, and for $1\leq j\leq s$ ,$\mathcal{P}_j$ the prime ideal above $\pi_j$ in $k$, with $\pi_j$ is prime of $k_0$ above $p_j$. We have evidently $\mathcal{I}^5\,=\,(\lambda)$, $\mathcal{Q}_i^5\,=\,(q_i)$, $\mathcal{P}_j^5\,=\,(\pi_j)$ in $k$.\\ 
we note by $C_{k,s}^{(\sigma)}$ the group of strong ambiguous adeal classes. We have to treat two cases:\\

(i) $C_{k,s}^{(\sigma)}\,\neq\, C_{k,5}^{(\sigma)}\,=\,C_{k,5}$:\\

Let $C_{k,5}^+\,=\,\{\mathcal{A}\in\, C_{k,5}| \mathcal{A}^{\tau^2}\,=\,\mathcal{A}\}$ and $C_{k,5}^-\,=\,\{\mathcal{A}\in\, C_{k,5}| \mathcal{A}^{\tau^2}\,=\,\mathcal{A}^{-1}\}$ be a nontrivial subgoups of $C_{k,5}$. We have $(C_{k,5}^{(\sigma)})^+\,=\, C_{k,5}^+$, by [\ref{Mani}, Lemma 6.2] $C_{k,5}^+\, \simeq\, C_{\Gamma,5}$ i.e $C_{k,5}^+$ can be generated by $5-$class comes from $\Gamma$ ($|C_{k,5}^+|\,=\,5$). The strong ambiguous classes are those of primes ramified in $k/k_0$, namly $[\mathcal{Q}_i]$ for $1\leq i\leq r$, $[\mathcal{P}_j]$ for $1\leq j\leq s$ and $[\mathcal{I}]$ if $\lambda$ is ramified in $k/k_0$. Its easy to see that:  $[\mathcal{Q}_i^{\tau^2}]\,=\,[\mathcal{Q}_i]^{\tau^2}\,=\,[\mathcal{Q}_i]$ and $[\mathcal{P}_j^{\tau^2}]\,=\,[\mathcal{P}_j]^{\tau^2}\,=\,[\mathcal{P}_j]$ and $[\mathcal{I}^{\tau^2}]\,=\,[\mathcal{I}]^{\tau^2}\,=\,[\mathcal{I}]$, we now that $C_{k,5}/C_{k,s}^{(\sigma)}$ is generated by image in $C_{k,5}/C_{k,s}^{(\sigma)}$  of element in $C_{k,5}^+$. Since $C_{k,s}^{(\sigma)}$ is generated by $[\mathcal{Q}_i],[\mathcal{P}_j]$ and $[\mathcal{I}]$ if $\lambda$ is ramified in $k/k_0$, all elements of $C_{k,5}$ will be fixed by $\tau^2$, in particular whose of $C_{k,5}^-$, therefore $\forall \mathcal{A} \in C_{k,5}^-$, $\mathcal{A}^{\tau^2}\,=\,\mathcal{A}^{-1}\,=\,\mathcal{A}$ i.e $\mathcal{A}^2\,=\,1$ i.e $\mathcal{A}^4\,=\,1$, hence $\mathcal{A}\,=\,1$ because $\mathcal{A}$ is $5-$class, so we get $C_{k,5}^-\,=\,{1}$, and this contradict the fact that $|C_{k,5}^{-}|=5$.\\

(ii) $C_{k,s}^{(\sigma)}\,=\, C_{k,5}^{(\sigma)}\,=\,C_{k,5}$:\\

In this case $C_{k,5}$ will be generated by $[\mathcal{Q}_i]$, $[\mathcal{P}_j]$ and $[\mathcal{I}]$ if $\lambda$ is ramifed in $k/k_0$, and as in (i) all the classes are fixed by $\tau^2$, which give the same contradiction. Thus we proved the existence of a prime $l$ divides $n$ such that $l\,\equiv\, 1\, (\mathrm{mod}\,5)$.\\ 
According to [\ref{Mani},section 5.1], we have rank $C_{k,5}^{(\sigma)}\,=\, d-3+q^*$ where $d$ is the number of prime ramified in $k/k_0$ and $q^*$ is an index has as value 0,1 or 2. Assuming that there is two prime $l_1$ and $l_2$ divides $n$ such that $l_i\,\equiv\, 1\, (\mathrm{mod}\,5)$, $(i = 1,2)$, then $d \geq 8$ and rank $C_{k,5}^{(\sigma)}$ is at least $5$, that is impossible. Thus if rank $C_{k,5}^{(\sigma)}\,=\,2$ its exsite unique prime $l\,\equiv\, 1\, (\mathrm{mod}\,5)$ divides $n$. 
\end{proof}
\subsection{Proof of Theoreme \ref{thm:Rank1}}

Let $\Gamma\,=\,\mathbb{Q}(\sqrt[5]{n})$ be a pure quintic field, where $n\geq 2$ is a $5^{th}$ power-free integer, $k\,=\,\Gamma(\zeta_5)$ be its normal closure, and $C_{k,5}$ be the $5$-class group of $k$. Let $C_{k,5}^{(\sigma)}$ be the ambigous ideal classes group under the action of $Gal(k/k_0)\,=\,\langle\sigma\rangle$. Since $k_0$ has class number $1$, $C_{k,5}^{(\sigma)}$ is un elementary abelian $5$-group, so rank $(C_{k,5}^{(\sigma)})\,=\,1 \, \mathrm{or}\,2$.\\
According to [\ref{Mani},section 5.1], the rank of $C_{k,5}^{(\sigma)}$ is given as follows:
\begin{center}
rank $(C_{k,5}^{(\sigma)})\,=\, d-3+q^*$
\end{center}
where $d$ is the number of prime ideals of $k_{0}$ ramified in $k$, and $q^{\ast}\,=\,0,\,1$ or $2$ according to whether $\zeta_{5}$ and $1+\zeta_5$ is not norm or is norm of an element of $k^*\,=\,k\setminus\lbrace 0\rbrace$ as follows:
\begin{center}
$q^*$ = $ \begin{cases}
2 & \text{if }\, \zeta, 1+\zeta \in N_{k/k_0}(k^*),\\
1 & \text{if }\, \zeta^i(1+\zeta)^j \in N_{k/k_0}(k^*)\,\text{ for some i and j },\\
0 & \text{if }\, \zeta^i(1+\zeta)^j \notin N_{k/k_0}(k^*)\, \text{for}\hspace{2mm} 0\leq i,j\leq 4 \text{ and}\hspace{2mm} i+j\neq 0.\\
\end{cases}$
\end{center}
We can writ $n$ as $n\,=\,\mu\lambda^{e}\pi_1^{e_1}....\pi_g^{e_g}$, where $\mu$ is a unit in $\mathcal{O}_{k_0}$, $\lambda = 1-\zeta_5$, $\pi_1,,,,\pi_g$ are primes in $k_0$ and $e \in \{0,1,2,3,4\}$, $e_i \in \{1,2,3,4\}$ for $1\leq i\leq g$. According to [\ref{Mani}, Lemma 5.1] we have, $\zeta_5\, \in N_{k/k_0}(k^*)\, \Longleftrightarrow\, N_{k_0/\mathbb{Q}}((\pi_i))\, \, \equiv \, 1 \,(\mathrm{mod}\,25)$ for all i, and from [\ref{Jan}, proposition 8.2] if $\pi$ is a prime of $\mathcal{O}_{ k_0}$ over a prime $p\,\in\, \mathbb{Z}$ we have $N_{k_0/\mathbb{Q}}((\pi))\, =\, p^f$, with $f$ is the least positif integer such that $p^f\,\, \equiv \,\,1\, (\mathrm{mod},\,5)$, so we can get that $\zeta_5$ is norm of an element of $k^*\,=\,k\setminus\lbrace 0\rbrace$ if and only if $p^f\,\, \equiv \,\,1\, (\mathrm{mod},\,25)$ for all prime $p\,\neq\,5$ dividing $n$:
\begin{enumerate}
\item[$-$] If $l\,\, \equiv \,\,1\, (\mathrm{mod},\,5)$, by corollary \ref{deco} $l\,=\,\pi_1\pi_2\pi_3\pi_4$, we have $N_{k_0/\mathbb{Q}}((\pi_i))\, =\, l$, so to have $N_{k_0/\mathbb{Q}}((\pi_i))\, \, \equiv \,\, 1\, (\mathrm{mod},\,25)$ the prime $l$ must verify $l\,\, \equiv \,\,1\, (\mathrm{mod}\,25)$.

\item[$-$] If $q\,\, \equiv \,\,\pm2\, (\mathrm{mod},\,5)$ we have $q$ is inert in $k_0$, so $N_{k_0/\mathbb{Q}}((q))\, =\, q^4$, so to have $N_{k_0/\mathbb{Q}}((q))\, \, \equiv \,\, 1\, (\mathrm{mod},\,25)$ the prime $q$ must verify $q\, \equiv \,\pm7\, (\mathrm{mod}\,25)$.

\item[$-$] If $p\,\, \equiv \,\,-1\, (\mathrm{mod}\,5)$,by corollary \ref{deco} $p\,=\,\pi_1\pi_2$ we have $N_{k_0/\mathbb{Q}}((\pi))\, =\, p^2$, so to have $N_{k_0/\mathbb{Q}}((\pi))\, \, \equiv \,\, 1\, (\mathrm{mod}\,25)$ the prime $p$ must verify $p\,\, \equiv \,\,-1\, (\mathrm{mod}\,25)$.
\end{enumerate}
(1) If rank $(C_{k,5}^{(\sigma)})\,=\,1$, we get that $d+q^*\,=\,4$, so there are three possible cases as follows:
\begin{itemize}
    \item Case 1: \(q^*=0\,\, \mathrm{and}\,\, d=4\),
    \item Case 2: \(q^*=1\,\, \mathrm{and}\,\, d=3\),
    \item Case 3: \(q^*=2\,\, \mathrm{and}\,\, d=2\), \end{itemize}
We will successively treat the three cases to prove the first point of the main theorem.

\begin{itemize}
 \item Case 1: we have $q^* = 0$ and $d=4$, so the number of prime ideals which are ramified in $k/k_0$ should be $4$. According to the proof of theorem \ref{ran2}, if $n$ is divisible by a prime $l \, \equiv \, 1\, (\mathrm{mod}\, 5)$ then the prime $l$ is unique.\\\\
 - If $l \, \equiv \, 1\, (\mathrm{mod}\, 5)$ divides $n$, then by Corollary \ref{deco}, $l\,=\,\pi_1\pi_2\pi_3\pi_4$ where $\pi_i$ are primes of $k_0$. The prime $l$ is ramified in $\Gamma$, because disk$(\Gamma/\mathbb{Q})\,=\, 5^5n^4$ and $l$ divides this discriminent, then $\pi_1,\pi_2,\pi_3$ and $\pi_4$ are ramified in $k$. Hence we have $d=4$, so $l$ is the unique prime divides $n$, because if $n$ is dividing by other prime we obtain $d>4$, which is impossible in the three cases. So $n\,=\,l^{e_1}$ with  $l \, \equiv \, 1\, (\mathrm{mod}\, 5)$ and $e_1\in\{1,2,3,4\}$. According to [\ref{Mani}, Lemma 5.1] we have $(\lambda)$ ramifies in $k/k_0\, \Longleftrightarrow \, n\not \equiv \, \pm1\pm7\, (\mathrm{mod}\,25)$, with $\lambda\,=\,1-\zeta_5$, so in the case $n\,=\,l^{e_1}$
where  $l \, \equiv \, 1\, (\mathrm{mod}\, 5)$ we should have $n \, \equiv \, \pm1\pm7\, (\mathrm{mod}\,25)$, so the only $l$ verifiy $n\,=\, l^{e_1} \, \equiv \, \pm1\pm7\, (\mathrm{mod}\,25)$ is $l \, \equiv \, 1\, (\mathrm{mod}\, 25)$, and for this prime we have $q^*\,\geq\,1$, because $\zeta_5$ is norme, which is impossible in this case.\\
We note that if $n\,=\,l^{e_1}$ with  $l \, \equiv \, 1\, (\mathrm{mod}\, 25)$ and $q*\,=\,2$, there is no fields $k$ of type $(5,5)$, because we have rank$(C_{k,5}^{(\sigma)})\,=\,3$, and if $q^*\,=\,1$ we have rank$(C_{k,5}^{(\sigma)})\,=\,2$, which will treat in the second point of the proof.\\\\
-If no prime $l \, \equiv \, 1\, (\mathrm{mod}\, 5)$ divides $n$, we have two forms of $n$:

\begin{enumerate}
\item[$(i)$] $n\,=\, 5^{e}p^{e_1}q_1^{e_2} \not \equiv \, \pm1\pm7\, (\mathrm{mod}\, 25)$ with $p \, \equiv \, -1\, (\mathrm{mod}\, 5),\,\, q_1 \, \equiv \, \pm2\, (\mathrm{mod}\, 5)$ and $e,e_1,e_2 \in \{1,2,3,4\}$. By Corollary \ref{deco}, $p\,=\,\pi_1\pi_2$, where $\pi_1,\,\pi_2$ are primes in $k_0$ and $q_1$ is inert in $k_0$, the prime $p$  is ramified in $\Gamma$, then $\pi_1,\,\pi_2$ are ramified in $k$. We have $q_1$ is ramified in $\Gamma$. Since $n\,\not \equiv \, \pm1\pm7\, (\mathrm{mod}\, 25)$, then $\lambda\,=\,1-\zeta_5$ is ramified in $k$, so we get $d=4$. To verifiy that $n\,=\, 5^{e}p^{e_1}q_1^{e_2} \not \equiv \, \pm1\pm7\, (\mathrm{mod}\, 25)$ we can choose $e_1\,=\,2$ and $e_2\,=\,1$ because $\mathbb{Q}(\sqrt[5]{ab^2c^3d^4})\,=\,\mathbb{Q}(\sqrt[5]{a^2b^4cd^3})\,=\,\mathbb{Q}(\sqrt[5]{a^3bc^4d^2})\,=\,\mathbb{Q}(\sqrt[5]{a^4b^3c^2d})$, so $n\,=\, 5^{e}p^{2}q_1$ with $e \in \{1,2,3,4\}$. In one hand if $e\,=\,2,3,4$ we have $n\, \equiv \, 0\, (\mathrm{mod}\, 25)$, in the other hand if $n\,=\, 5p^{2}q_1$ we have $p^2q_1\,=\,5\alpha+2$ or $p^2q_1\,=\,5\alpha'+3$ with $\alpha,\alpha' \in \mathbb{Z}$, so $5p^{2}q_1 \, \equiv \, 10\, (\mathrm{mod}\, 25)$ or $5p^{2}q_1\, \equiv \, 15\, (\mathrm{mod}\, 25)$, therefore we conclude that $n\, \not \equiv \, \pm1\pm7\, (\mathrm{mod}\, 25)$. According to [\ref{Mani}, Theorem 5.18], if $p\,\equiv\,-1\,(\mathrm{mod}\, 25)$ and $q_1\,\equiv\,\pm7\,(\mathrm{mod}\, 25)$, rank $(C_{k,5})$ is at least $3$ which contradict the fact that $C_{k,5}$ is of type $(5,5)$, and by the proof of [\ref{Mani}, Theorem 5.13, Theorem 5.15], for the other congruence cases of $p$ and $q_1$ we have $q^*\,=\,1$ which is impossible in this case.\\

\item[$(ii)$] $n\,=\, p^{e}q_1^{e_1}q_2^{e_2} \, \equiv \, \pm1\pm7\, (\mathrm{mod}\, 25)$ with $p \, \equiv \, -1\, (\mathrm{mod}\, 5),\,\, q_1 \, \equiv \, 2\, (\mathrm{mod}\, 5),\,\, \\q_2 \, \equiv \, 3\, (\mathrm{mod}\, 5)$ and $e,e_1,e_2 \in \{1,2,3,4\}$. As $\mathbb{Q}(\sqrt[5]{ab^2c^3d^4})\,=\,\mathbb{Q}(\sqrt[5]{a^2b^4cd^3})\,=\,\mathbb{Q}(\sqrt[5]{a^3bc^4d^2})\,=\,\mathbb{Q}(\sqrt[5]{a^4b^3c^2d})$ we can choose  $e_1\,=\,2, e_2\,=\,1$ i.e $n\,=\, p^{e}q_1^{2}q_2 $ with $e \in \{1,2,3,4\}$. By Corollary \ref{deco} $p\,=\,\pi_1\pi_2$ where $\pi_1,\,\pi_2$ are primes in $k_0$ and $q_1,\, q_2$ are inert in $k_0$. We have $p,\,q_1$ and $q_2$ are ramified in $\Gamma$,so $\pi_1,\,\pi_2,\,q_1$ and $q_2$ are ramified in $k$ then $d=4$. The condition $n\,=\, p^{e}q_1^{2}q_2 \, \equiv \, \pm1\pm7\, (\mathrm{mod}\, 25)$ is not verified for all $p \, \equiv \, -1\, (\mathrm{mod}\, 5),\,\, q_1 \, \equiv \, 2\, (\mathrm{mod}\, 5)$ and $q_2 \, \equiv \, 3\, (\mathrm{mod}\, 5)$, so we combine all the cases of congruence and we obtain that $p \, \equiv \, -1\, (\mathrm{mod}\, 25),\,\, q_1 \, \equiv \, 12\, (\mathrm{mod}\, 25),\,\, q_2 \, \equiv \, 3\, (\mathrm{mod}\, 25)$. By [\ref{Mani}, Lemma 5.1], since $N_{k_0/\mathbb{Q}}(\pi_i)\,\equiv\,1\,(\mathrm{mod}\, 25),\, N_{k_0/\mathbb{Q}}(q_1)\,\not\equiv\,1\,(\mathrm{mod}\, 25), N_{k_0/\mathbb{Q}}(q_2)\,\not\equiv\,1\,(\mathrm{mod}\, 25)$, we have $q^*\,=\,1$ which is impossible in this case.

\end{enumerate}

We deduce that in the case 1, there is no radicand $n$ who verifiy rank$(C_{k,5}^{(\sigma)})\,=\,1$.

\item Case 2: we have $q^* = 1$ and $d=3$, so the number of prime ideals which are ramified in $k/k_0$ should be $3$. According to case 1, n is not divisible by any prime $l \, \equiv \, 1\, (\mathrm{mod}\, 5)$ in this case.\\
We can writ $n$ as $n\,=\,\mu\lambda^{e}\pi_1^{e_1}....\pi_g^{e_g}$, where $\mu$ is a unit in $\mathcal{O}_{k_0}$, $\lambda = 1-\zeta_5$, $\pi_1,,,,\pi_g$ are primes in $k_0$ and $e \in \{0,1,2,3,4\}$, $e_i \in \{1,2,3,4\}$ for $1\leq i\leq g$.\\
By [\ref{Mani}, proposition 5.2] $d\,=\,g$ or $g+1$ according to whether $n \, \equiv \, \pm1\pm7\, (\mathrm{mod}\, 25)$ or $n \not \equiv \, \pm1\pm7\, (\mathrm{mod}\, 25)$, to obtain $d=3$, $n$ must be written in $\mathcal{O}_{k_0}$ as: $n\,=\, \pi_1^{e_1}\pi_2^{e_2}\pi_3^{e_3}$ or $n\,=\, \lambda^{e}\pi_1^{e_1}\pi_2^{e_2}$, therefore we have three forms of $n$: 
\begin{enumerate}
\item[$(i)$] $n\,=\, 5^ep^{e_1} \not \equiv \,\pm1\pm7\, (\mathrm{mod}\, 25)$ with $p \, \equiv \, -1\, (\mathrm{mod}\, 5)$ and $e,e_1 \in \{1,2,3,4\}$. As $\mathbb{Q}(\sqrt[5]{ab^2c^3d^4})\,=\,\mathbb{Q}(\sqrt[5]{a^2b^4cd^3})\,=\,\mathbb{Q}(\sqrt[5]{a^3bc^4d^2})\,=\,\mathbb{Q}(\sqrt[5]{a^4b^3c^2d})$ we can choose  $e_1\,=\,1$ i.e $n\,=\, 5^ep $ with $e \in \{1,2,3,4\}$. By Corollary \ref{deco} $p\,=\, \pi_1\pi_2$ with $\pi_1,\,\pi_2$ are primes in $k_0$, we have $p$ is ramified in $\Gamma$, so $\pi_1,\,\pi_2$ are ramified in $k$, and since $n\,\not \equiv \,\pm1\pm7\, (\mathrm{mod}\, 25)$, according to [\ref{Mani}, Lemma 5.1], $\lambda \,=\, 1-\zeta_5$ is ramified in $k$ so we obtain $d=3$. The condition $n\,\not \equiv \,\pm1\pm7\, (\mathrm{mod}\, 25)$ is verified for all $p \, \equiv \, -1\, (\mathrm{mod}\, 5)$ because, if $e\,=\,2,3,4$ we have $n\,=\,5^ep\, \equiv \, 0\, (\mathrm{mod}\, 25)$, if $e=1$ i.e $n\,=\, 5p$ we have $p\,=\,5\alpha+4$ that implie $5p\,=\,25\alpha+20$ with $\alpha \in \mathbb{Z}$, so $n\,=\,5p \, \equiv \, 20\, (\mathrm{mod}\, 25)$. According to the proof of [\ref{Mani}, theorem 5.15] if $p \, \equiv \, -1 (\mathrm{mod}\, 25)$ we have $q^*\,=\,2$, and   if $p \, \not\equiv \, -1 (\mathrm{mod}\, 25)$ we have $q^*\,=\,1$, we conclude that $n\,=\,5^ep \not \equiv \, \pm1\pm7\, (\mathrm{mod}\, 25)$ with $p \, \not\equiv \, -1 (\mathrm{mod}\, 25)$.\\
We note that the computational number theory system \textbf{PARI} [\ref{PRI}], show that if $n\,=\,5^ep \not \equiv \, \pm1\pm7\, (\mathrm{mod}\, 25)$ with $p \, \not\equiv \, -1 (\mathrm{mod}\, 25)$, the field $k$ is not always of type $(5,5)$.

\item[$(ii)$] $n\,=\, 5^{e}q_1^{e_1}q_2^{e_2} \not \equiv \, \pm1\pm7\, (\mathrm{mod}\, 25)$ with $q_1 \, \equiv \, 2\, (\mathrm{mod}\, 5),\,\, q_2 \, \equiv \, 3\, (\mathrm{mod}\, 5)$ and $e,e_1,e_2 \in \{1,2,3,4\}$. As $\mathbb{Q}(\sqrt[5]{ab^2c^3d^4})\,=\,\mathbb{Q}(\sqrt[5]{a^2b^4cd^3})\,=\,\mathbb{Q}(\sqrt[5]{a^3bc^4d^2})\,=\,\mathbb{Q}(\sqrt[5]{a^4b^3c^2d})$, we can choose $e_1\,=\,2$ and $e_2\,=\,1$ i.e $n\,=\, 5^{e}q_1^{2}q_2$ with $e \in \{1,2,3,4\}$. By Corollary \ref{deco}, $q_1$ and $q_2$ are inert in $k_0$ , and $q_1$, $q_2$  are ramified in $\Gamma$, then $q_1,\,q_2$ are ramified in $k$. Since $n\,\not \equiv \, \pm1\pm7\, (\mathrm{mod}\, 25)$, then $\lambda\,=\,1-\zeta_5$ is ramified in $k$, so we get $d=3$.The condition  $n\, \not \equiv \, \pm1\pm7\, (\mathrm{mod}\, 25)$ is verified for all $q_1 \, \equiv \, 2\, (\mathrm{mod}\, 5),\,\, q_2 \, \equiv \, 3\, (\mathrm{mod}\, 5)$, if $e\,=\,2,3,4$ we have $n\,= \,5^{e}q_1^{2}q_2 \,\, \equiv \, 0\, (\mathrm{mod}\, 25)$, if $n\,=\, 5q_1^{2}q_2$ we have $q_1^2q_2\,=\,5\alpha+2$  with $\alpha, \in \mathbb{Z}$, so $5q_1^{2}q_2 \, \equiv \, 10\, (\mathrm{mod}\, 25)$. If $q_1 \, \equiv \, 7\, (\mathrm{mod}\, 25)$ and $q_2 \, \equiv \, -7\, (\mathrm{mod}\, 25)$ we have $q^*\,=\,2$, and if $q_1 \, \not\equiv \, 7\, (\mathrm{mod}\, 25)$ or $q_2 \, \not\equiv \, -7\, (\mathrm{mod}\, 25)$, according to the proof of [\ref{Mani}, theorem 5.13] we have $q^*\,=\,1$, but for this form of the radicand $n$ the computational number theory system \textbf{PARI} [\ref{PRI}] show that $C_{k,5}\,\simeq\,\mathbb{Z}/5\mathbb{Z}$.\\

\item[$(iii)$] $n\,=\, p^{e}q_1^{e_1} \, \equiv \, \pm1\pm7\, (\mathrm{mod}\, 25)$ with $p \, \equiv \, -1\, (\mathrm{mod}\, 5),\,\, q_1 \, \equiv \, \pm2\, (\mathrm{mod}\, 5)$ and $e,e_1 \in \{1,2,3,4\}$. As $\mathbb{Q}(\sqrt[5]{ab^2c^3d^4})\,=\,\mathbb{Q}(\sqrt[5]{a^2b^4cd^3})\,=\,\mathbb{Q}(\sqrt[5]{a^3bc^4d^2})\,=\,\mathbb{Q}(\sqrt[5]{a^4b^3c^2d})$ we can choose  $e_1\,=\,1$ i.e $n\,=\, p^{e}q_1$ with $e \in \{1,2,3,4\}$. By Corollary \ref{deco} $p\,=\,\pi_1\pi_2$ where $\pi_1,\,\pi_2$ are primes in $k_0$ and $q_2$ is inert in $k_0$. We have $p$ is ramified in $\Gamma$,so $\pi_1,\,\pi_2$ are ramified in $k$. $q_2$ is ramified in $\Gamma$ too, we obtain $d=3$. The condition $n\,=\, p^{e}q_1 \, \equiv \, \pm1\pm7\, (\mathrm{mod}\, 25)$ is not verified for all $p \, \equiv \, -1\, (\mathrm{mod}\, 5),\,\, q_1 \, \equiv \, \pm2\, (\mathrm{mod}\, 5)$, so we combine all the cases of congruence and we obtain that $p \, \not\equiv \,-1\, (\mathrm{mod}\, 25)$ and $ q_1 \, \not\equiv \,\pm7\, (\mathrm{mod}\, 25)$. According to [\ref{Mani}, theorem 5.18] If $p \, \equiv \, -1\, (\mathrm{mod}\, 25)$ and $q_1 \, \equiv \, \pm7\, (\mathrm{mod}\, 25)$ we have rank $C_{k,5} \geq 3$ which is impossible in our cases. If $p \, \not\equiv \,-1\, (\mathrm{mod}\, 25)$ and $q_1 \, \not\equiv \, \pm7\, (\mathrm{mod}\, 25)$, according to [\ref{Mani}, theorem 5.13] we have $q^*\,=\,1$. Using the computational number theory system \textbf{PARI/GP} [\ref{PRI}], if $n\,=\,p^{e}q_1$ with $p \, \not\equiv \, -1\, (\mathrm{mod}\, 25)$ and $ q_1 \, \not\equiv \, \pm7\, (\mathrm{mod}\, 25)$, the field $k$ is not always of type $(5,5)$.
\end{enumerate}

%\newpage
We summarize all forms of integer $n$ in the case 2, for which $k$ is of type $(5,5)$ and rank $(C_{k,5}^{(\sigma)})\,=\,1$ as follows:
%\begin{center}
\begin{equation}
\label{}
 n=\left\lbrace
   \begin{array}{ll}
   
   5^ep \not \equiv \, \pm1,\pm7\, (\mathrm{mod}\,25) & \text{ with } p\, \not\equiv \, -1\,(\mathrm{mod}\,25),\\

   p^{e}q_1 \, \equiv \, \pm1,\pm7\, (\mathrm{mod}\,25) & \text{ with } p\, \not\equiv \, -1\,(\mathrm{mod}\,25) \text{ and } q_1\, \not\equiv \, \pm7\,(\mathrm{mod}\,25)\\
   
   \end{array}
   \right.
\end{equation}
%\end{center}
\item Case 3: we have $q^* = 2$ and $d=2$, so the number of prime ideals which are ramified in $k/k_0$ should be $2$. Let $n\,=\,\mu\lambda^{e}\pi_1^{e_1}....\pi_g^{e_g}$, where $\mu$ is a unit in $\mathcal{O}_{k_0}$, $\lambda = 1-\zeta_5$, $\pi_1,,,,\pi_g$ are primes in $k_0$ and $e \in \{0,1,2,3,4\}$, $e_i \in \{1,2,3,4\}$ for $1\leq i\leq g$.\\
$d\,=\,g$ or $g+1$ according to whether $n \, \equiv \, \pm1\pm7\, (\mathrm{mod}\, 25)$ or $n \not \equiv \, \pm1\pm7\, (\mathrm{mod}\, 25)$, to obtain $d=2$, $n$ must be written in $\mathcal{O}_{k_0}$ as: $n\,=\, \pi_1^{e_1}\pi_2^{e_2}$ or $n\,=\, \lambda^{e}\pi_1^{e_1}$, therefore we have three forms of $n$: 
\begin{enumerate}
\item[$(i)$] $n\,=\, 5^eq_1^{e_1} \not \equiv \,\pm1\pm7\, (\mathrm{mod}\, 25)$ with $q_1 \, \equiv \, \pm2\, (\mathrm{mod}\, 5)$ and $e,e_1 \in \{1,2,3,4\}$. As $\mathbb{Q}(\sqrt[5]{ab^2c^3d^4})\,=\,\mathbb{Q}(\sqrt[5]{a^2b^4cd^3})\,=\,\mathbb{Q}(\sqrt[5]{a^3bc^4d^2})\,=\,\mathbb{Q}(\sqrt[5]{a^4b^3c^2d})$ we can choose  $e_1\,=\,1$ i.e $n\,=\, 5^eq_1 $ with $e \in \{1,2,3,4\}$. Since $q^*=2$ so we have $q_1\,\, \equiv \,\,\pm7\, (\mathrm{mod}\, 25)$. By Corollary \ref{deco} $q_1$ is inert in $k_0$, we have $q_1$ is ramified in $\Gamma$, so $q_1$ is ramified too in $k$, and since $n\,\not \equiv \,\pm1\pm7\, (\mathrm{mod}\, 25)$, according to [\ref{Mani}, Lemma 5.1], $\lambda \,=\, 1-\zeta_5$ is ramified in $k$ so we obtain $d=2$. The condition $n\,\not \equiv \,\pm1\pm7\, (\mathrm{mod}\, 25)$ is verified for all $q_1 \,\, \equiv \,\, \pm7\, (\mathrm{mod}\, 5)$, because if $e\,=\,2,3,4$ we have $n\,=\,5^eq_1\, \equiv \, 0\, (\mathrm{mod}\, 25)$, if $e=1$ i.e $n\,=\, 5q_1$ we have $n\,=\,5q_1 \, \equiv \, \pm10\, (\mathrm{mod}\, 25)$, we conclude that $n\,=\,5^eq_1 \not \equiv \, \pm1\pm7\, (\mathrm{mod}\, 25)$ with $q_1 \, \equiv \, \pm7\, (\mathrm{mod}\, 25)$, but for this form of the radicand $n$ the computational number theory system \textbf{PARI/GP} [\ref{PRI}] show that $C_{k,5}\,\simeq\,\mathbb{Z}/5\mathbb{Z}$.

\item[$(ii)$] $n\,=\, q_1^{e_1}q_2^{e_2} \, \equiv \, \pm1\pm7\, (\mathrm{mod}\, 25)$ with $q_1 \, \equiv \, 3\, (\mathrm{mod}\, 5),\,\, q_2 \, \equiv \, 2\, (\mathrm{mod}\, 5)$ and $e_1,e_2 \in \{1,2,3,4\}$. As $\mathbb{Q}(\sqrt[5]{ab^2c^3d^4})\,=\,\mathbb{Q}(\sqrt[5]{a^2b^4cd^3})\,=\,\mathbb{Q}(\sqrt[5]{a^3bc^4d^2})\,=\,\mathbb{Q}(\sqrt[5]{a^4b^3c^2d})$ we can choose  $e_2\,=\,1$ i.e $n\,=\, q_1^{e_1}q_2$ with $e_1 \in \{1,2,3,4\}$. Since $q^*=2$, we have $\zeta_5\,\in\,N_{k/k_0}(k^*)$, we get that $q_1\,\, \equiv \,\,-7\, (\mathrm{mod}\, 25)$ and $q_2\,\, \equiv \,\,7\, (\mathrm{mod}\, 25)$.  By Corollary \ref{deco} $q_1$ and $q_2$ are inert in $k_0$, and we have $q_1,\,q_2$ are ramified in $\Gamma$,so $q_1,\,q_2$ are ramified in $k$, so we obtain $d=2$. The condition $n\,=\, q_1^{e_1}q_2 \, \equiv \, \pm1\pm7\, (\mathrm{mod}\, 25)$ is verified, because we have $n\,=\,q_1^{e_1}q_2 \, \equiv \, \pm7\, (\mathrm{mod}\, 25)$ for all $e_1$, so we conclude that $n\,=\, q_1^{e_1}q_2 \, \equiv \, \pm1\pm7\, (\mathrm{mod}\, 25)$ with $q_1 \, \equiv \, -7\, (\mathrm{mod}\, 25),\,\, q_2 \, \equiv \, 7\, (\mathrm{mod}\, 25)$, but for this form of the radicand $n$ the computational number theory system \textbf{PARI/GP} [\ref{PRI}] show that $C_{k,5}\,\simeq\,\mathbb{Z}/5\mathbb{Z}$

\item[$(iii)$] $n\,=\, p^{e} \, \equiv \, \pm1\pm7\, (\mathrm{mod}\, 25)$ with $p \, \equiv \, -1\, (\mathrm{mod}\, 5)$ and $e  \in \{1,2,3,4\}$. Since $q^*=2$, we have $\zeta_5\,\in\,N_{k/k_0}(k^*)$, we get that $p\,\, \equiv \,\,-1\, (\mathrm{mod}\, 25)$. By Corollary \ref{deco}, $p\,=\,\pi_1\pi_2$ where $\pi_1,\,\pi_2$ are primes of $k_0$. The prime $p$ is ramified in $\Gamma$, then $\pi_1,\pi_2$ are ramified in $k$. hence we have $d=2$. The condition $n\,=\, p^{e} \, \equiv \, \pm1\pm7\, (\mathrm{mod}\, 25)$ is verified for all $p\,\, \equiv \,\,-1\, (\mathrm{mod}\, 25)$, we conclude that $n\,=\, p^{e} \, \equiv \, \pm1\pm7\, (\mathrm{mod}\, 25)$ with $p \, \equiv \, -1\, (\mathrm{mod}\, 25)$. Using the computational number theory system \textbf{PARI/GP} [\ref{PRI}], if $n\,=\, p^{e} \, \equiv \, \pm1\pm7\, (\mathrm{mod}\, 25)$ with $p \, \equiv \, -1\, (\mathrm{mod}\, 25)$, the field $k$ is not always of type $(5,5)$.

\end{enumerate}
We deduce that in the case 3, there is one form of $n$ for which the fields $k$ is of type $(5,5)$ and rank$(C_{k,5}^{(\sigma)})\,=\,1$ as follows:
\begin{center}
 $n\,=\,p^{e} \, \equiv \, \pm1,\pm7\, (\mathrm{mod}\,25)  \text{ with } p\, \equiv \, -1\,(\mathrm{mod}\,25)$\\
    
\end{center}
\end{itemize}

(2) If rank $(C_{k,5}^{(\sigma)})\,=\,2$, so  $C_{k,5}\,=\,C_{k,5}^{(\sigma)}$. According to [\ref{Mani},section 5.1], the rank of $C_{k,5}^{(\sigma)}$ is given as follows:
\begin{center}
rank $(C_{k,5}^{(\sigma)})\,=\, d-3+q^*$
\end{center}
where $d$ et $q^*$ are defined previously. Since rank $(C_{k,5}^{(\sigma)})\, =\, 2$ and $q^*\,=\,0, 1\, \mathrm{or}\, 2$, there are three possible cases as follows:\\
\begin{itemize}
    \item Case 1: \(q^*=0\,\, \mathrm{and}\,\, d=5\),
    \item Case 2: \(q^*=1\,\, \mathrm{and}\,\, d=4\),
    \item Case 3: \(q^*=2\,\, \mathrm{and}\,\, d=3\), \end{itemize}
We will treat the three cases to prove the forms of the radicand $n$. By theorem \ref{ran2}, $n$ must be divisible by one prime $l \, \equiv \, 1\,(\mathrm{mod}\,5)$ in all cases, and since rank $(C_{k,5}^{(\sigma)})\, =\, 2$, the invariant $q^*$ should be $0$ or $1$, because if $q^*\,=\,2$ and $l\,\equiv\,1\,(\mathrm{mod}\,5)$ divides $n$, we get that the invariant $d$ is at least $4$, so we obtain that rank $(C_{k,5}^{(\sigma)})$ is at least $3$.
\begin{itemize}
\item Case 1: we have $q^* = 0$ and $d=5$, so the number of prime ideals which are ramified in $k/k_0$ should be $5$. The radicand $n$ must be divisible by one prime $l \, \equiv \, 1\, (\mathrm{mod}\, 5)$.\\
We can writ $n\,\in\,\mathcal{O}_{k_0}$ as $n\,=\,\mu\lambda^{e}\pi_1^{e_1}....\pi_g^{e_g}$, where $\mu$ is a unit in $\mathcal{O}_{k_0}$, $\lambda = 1-\zeta_5$, $\pi_1,,,,\pi_g$ are primes in $k_0$ and $e \in \{0,1,2,3,4\}$, $e_i \in \{1,2,3,4\}$ for $1\leq i\leq g$.\\
$d\,=\,g$ or $g+1$ according to whether $n \, \equiv \, \pm1\pm7\, (\mathrm{mod}\, 25)$ or $n \not \equiv \, \pm1\pm7\, (\mathrm{mod}\, 25)$. To obtain $d=5$, $n$ must be written in $\mathcal{O}_{k_0}$ as: $n\,=\, \pi_1^{e_1}\pi_2^{e_2}\pi_3^{e_3}\pi_4^{e_4}\pi_5^{e_5}$ or $n\,=\, \lambda^{e}\pi_1^{e_1}\pi_2^{e_2}\pi_3^{e_3}\pi_4^{e_4}$, therefore we have two forms of $n$: 
\begin{enumerate}
\item[$(i)$] $n\,=\, 5^el^{e_1} \not \equiv \,\pm1\pm7\, (\mathrm{mod}\, 25)$ with $l \, \equiv \, 1\, (\mathrm{mod}\, 5)$ and $e,e_1 \in \{1,2,3,4\}$. As $\mathbb{Q}(\sqrt[5]{ab^2c^3d^4})\,=\,\mathbb{Q}(\sqrt[5]{a^2b^4cd^3})\,=\,\mathbb{Q}(\sqrt[5]{a^3bc^4d^2})\,=\,\mathbb{Q}(\sqrt[5]{a^4b^3c^2d})$ we can choose  $e_1\,=\,1$ i.e $n\,=\, 5^el $ with $e \in \{1,2,3,4\}$. By Corollary \ref{deco} $l\,=\, \pi_1\pi_2\pi_3\pi_4$ with $\pi_i$ are primes in $k_0$, we have $l$ is ramified in $\Gamma$, so $\pi_1,\,\pi_2,\pi_3$ and $\pi_4$ are ramified in $k$, and since $n\,\not \equiv \,\pm1\pm7\, (\mathrm{mod}\, 25)$, according to [\ref{Mani}, Lemma 5.1], $\lambda \,=\, 1-\zeta_5$ is ramified in $k$ so we obtain $d=5$. The condition $n\,\not \equiv \,\pm1\pm7\, (\mathrm{mod}\, 25)$ is verified for all $l \, \equiv \, 1\, (\mathrm{mod}\, 5)$, because if $e\,=\,2,3,4$ we have $n\,=\,5^el\, \equiv \, 0\, (\mathrm{mod}\, 25)$, if $e=1$ i.e $n\,=\, 5l$ we have $l\,=\,5\alpha+1$ that implie $5l\,=\,25\alpha+5$ with $\alpha \in \mathbb{Z}$, so $n\,=\,5l \, \equiv \, 5\, (\mathrm{mod}\, 25)$. If $l \, \equiv \, 1 (\mathrm{mod}\, 25)$ we have $\zeta_5\,\in\,N_{k/k_0}(k^*)$, so $q^*\,\geq\,1$ which in impossible in this case. We conclude that $n\,=\,5^el \not \equiv \, \pm1\pm7\, (\mathrm{mod}\, 25)$ with $l \, \not\equiv \, 1 (\mathrm{mod}\, 25)$. Using the computational number theory system \textbf{PARI/GP} [\ref{PRI}], if $n\,=\,5^el \not\equiv \, \pm1\pm7\, (\mathrm{mod}\, 25)$ with $l \, \not\equiv \, 1 (\mathrm{mod}\, 25)$, the field $k$ is not always of type $(5,5)$.\\

\item[$(ii)$] $n\,=\, l^{e}q_1^{e_1} \, \equiv \, \pm1\pm7\, (\mathrm{mod}\, 25)$ with $l \, \equiv \, 1\, (\mathrm{mod}\, 5),\,\, q_1 \, \equiv \, \pm2\, (\mathrm{mod}\, 5)$ and $e, e_1 \in \{1,2,3,4\}$. As $\mathbb{Q}(\sqrt[5]{ab^2c^3d^4})\,=\,\mathbb{Q}(\sqrt[5]{a^2b^4cd^3})\,=\,\mathbb{Q}(\sqrt[5]{a^3bc^4d^2})\,=\,\mathbb{Q}(\sqrt[5]{a^4b^3c^2d})$ we can choose  $e_1\,=\,1$ i.e $n\,=\, l^{e}q_1$ with $e \in \{1,2,3,4\}$. By Corollary \ref{deco} $l\,=\,\pi_1\pi_2\pi_3\pi_4$ where $\pi_i$ are primes in $k_0$ and $q_1$ is inert in $k_0$. We know that $l$ is ramified in $\Gamma$,so $\pi_1,\,\pi_2,\pi_3$ and $\pi_4$ are ramified in $k$. $q_1$ is ramified in $\Gamma$ too, we obtain $d=5$. The condition $n\,=\, l^{e}q_1 \, \equiv \, \pm1\pm7\, (\mathrm{mod}\, 25)$ is not verified for all $l \, \equiv \, 1\, (\mathrm{mod}\, 5),\,\, q_1 \, \equiv \, \pm2\, (\mathrm{mod}\, 5)$, so we combine all the cases of congruence and we obtain that $l \, \equiv \, 1\, (\mathrm{mod}\, 5)$ and $ q_1 \, \equiv \, \pm2,\pm3\,\pm7 \,(\mathrm{mod}\, 25)$. Using the computational number theory system \textbf{PARI/GP} [\ref{PRI}], if $n\,=\, l^{e}q_1 \, \equiv \, \pm1\pm7\, (\mathrm{mod}\, 25)$, with $l \, \equiv \,1\, (\mathrm{mod}\, 5)$, and $ q_1 \, \equiv \, \pm2,\pm3\,\pm7\, (\mathrm{mod}\, 25)$, the field $k$ is not always of type $(5,5)$
\end{enumerate}

We summarize all forms of integer $n$ in this case  as follows:
%\begin{center}
\begin{equation}
\label{}
 n=\left\lbrace
   \begin{array}{ll}
   
   5^el \not \equiv \, \pm1,\pm7\, (\mathrm{mod}\,25) & \text{ with } l\, \not\equiv \, 1\,(\mathrm{mod}\,25),\\

   l^{e}q_1 \, \equiv \, \pm1,\pm7\, (\mathrm{mod}\,25) & \text{ with } l \, \equiv \, 1\, (\mathrm{mod}\, 5)\, q_1 \, \equiv \, \pm2,\pm3\,\pm7\, (\mathrm{mod}\, 25)\\
   
   \end{array}
   \right.
\end{equation}
%\end{center}

\item Case 2: We have $q^*\,=\,1$ and $d=4$, so the number of prime ideals which are ramified in $k/k_0$ should be $4$. The radicand $n$ must be divisible by one prime $l \, \equiv \, 1\, (\mathrm{mod}\, 5)$, and according to Corollary \ref{deco} $l$ splits in $k_0$ as $l\,=\, \pi_1\pi_2\pi_3\pi_4$ with $\pi_i$ are primes in $k_0$. Since $l$ is ramified in $\Gamma$, so $\pi_1,\,\pi_2,\pi_3$ and $\pi_4$ are ramified in $k$, hence if $n$ is devisible by another prime than $l$ the number of primes which are ramified in $k/k_0$ surpass $4$, therefore we have unique form of $n$ in this case, its $n\,=\, l^{e} \, \equiv \, \pm1\pm7\, (\mathrm{mod}\, 25)$ with $l \, \equiv \, 1\, (\mathrm{mod}\, 5)$ and $e\in \{1,2,3,4\}$. The condition $n\, \equiv \,\, \pm1\pm7\, (\mathrm{mod}\, 25)$ is verified only for $l\,  \, \equiv \, 1, (\mathrm{mod}\, 25)$, and we have $q^*\,=\,1$. In conclusion we get $n\,=\, l^{e}$ with $l\, \, \equiv \, 1, (\mathrm{mod}\, 25)$. Using the computational number theory system \textbf{PARI/GP} [\ref{PRI}], if $n\,=\,l^{e} \equiv \, \pm1\pm7\, (\mathrm{mod}\, 25)$ with $l \, \equiv \, 1 (\mathrm{mod}\, 25)$, the field $k$ is not always of type $(5,5)$

%\newpage
\item case 3: We have $q^*\,=\,2$ and $d=3$, so the number of prime ideals which are ramified in $k/k_0$ should be $3$. The radicand $n$ must be divisible by one prime $l \, \equiv \, 1\, (\mathrm{mod}\, 5)$, and according to Corollary \ref{deco} $l\,=\, \pi_1\pi_2\pi_3\pi_4$ with $\pi_i$ are primes in $k_0$. Since $p$ is ramified in $\Gamma$, so $\pi_1,\,\pi_2,\pi_3$ and $\pi_4$ are ramified in $k$, so we deduce that the number of primes ramified in $k/k_0$ is at least $4$, so the case 3 does not exist.
\end{itemize}

\section{Numerical examples}
Let $\Gamma\,=\,\mathbb{Q}(\sqrt[5]{n})$ be a pure quintic field, where $n$ is a positive integer, $5^{th}$ power-free, and let $k\,=\,\mathbb{Q}(\sqrt[5]{n},\zeta_5)$ its normal closure. We assume that $C_{k,5}$ is of type $(5,5)$. Using the system \textbf{PARI/GP} [\ref{PRI}], we illustrate our main result Theorem \ref{thm:Rank1}. 
%Here we denote by:
%\begin{center}
%$C_{k,5}$ : the $5$-class groupe of $k$\\
%$C_{k,5}^{(\sigma)}$ : the group of ambiguous classes of $k/k_0$\\
%\end{center}
\subsection{rank $(C_{k,5}^{(\sigma)})\, =\, 1$}
\begin{center}
Table 1: $n\,=\, p^{e}\,\equiv \, \pm1\pm7\, (\mathrm{mod}\, 25)$ with $p\,\equiv \,-1\, (\mathrm{mod}\, 25)$\\
\begin{tabular}{|c|c|c|c|c|c|c|}
\hline 
$p$ &  $n\,=\,p^{e}$ & $p\,(\mathrm{mod}\,5)$ & $p\,(\mathrm{mod}\,25)$ & $h_{k,5}$ & $C_{k,5}$ & rank $(C_{k,5}^{(\sigma)})$ \\ 
\hline 
149 & 22201 = $149^2$ & -1 & -1 & 25 & $(5,5)$ & 1 \\ 
199 & 7880599 = $199^3$ & -1 & -1 & 25 & $(5,5)$ & 1 \\ 
349 & 42508549 = $349^3$ & -1 & -1 & 25 & $(5,5)$ & 1 \\ 
449 &  $449$ & -1 & -1 & 25 & $(5,5)$ & 1 \\ 
559 &  $559$ & -1 & -1 & 25 & $(5,5)$ & 1 \\ 
1249 &  $1249$ & -1 & -1 & 25 & $(5,5)$ & 1 \\ 
1499 &  $1499$ & -1 & -1 & 25 & $(5,5)$ & 1 \\ 
1949 &  $1949$ & -1 & -1 & 25 & $(5,5)$ & 1 \\ 
1999 &  $1999$ & -1 & -1 & 25 & $(5,5)$ & 1 \\ 
2099 &  $449$ & -1 & -1 & 25 & $(5,5)$ & 1 \\ 
\hline 
\end{tabular} 
\end{center}
\newpage
\hspace{4cm} Table 2: $n\,=\, q_1^{e_1}q_2\,\equiv \, \pm1\pm7\, (\mathrm{mod}\, 25)$ with $q_i\,\equiv \,\pm7\, (\mathrm{mod}\, 25)$\\
\begin{tabular}{|c|c|c|c|c|c|c|c|c|c|}
\hline 
$q_1$ & \small$q_1\,(\mathrm{mod}\,5)$ & \small$q_1\,(\mathrm{mod}\,25)$& $q_2$ & \small$q_2\,(\mathrm{mod}\,5)$ & \small$q_2\,(\mathrm{mod}\,25)$& $n\,=\,q_1^{e_1}q_2$ & $h_{k,5}$ & $C_{k,5}$ & \small rank $(C_{k,5}^{(\sigma)})$\\ 
\hline 
7 & 2 & 7 & 43 & 3 & -7 & 2107 = $7^2\times43$ & 5 & $\mathbb{Z}/5\mathbb{Z}$ & 1 \\
7 & 2 & 7 & 193 & 3 & -7 & 1351 = $7\times193$ & 5 & $\mathbb{Z}/5\mathbb{Z}$ & 1 \\ 
7 & 2 & 7 & 293 & 3 & -7 & 2051 = $7\times293$ & 5 & $\mathbb{Z}/5\mathbb{Z}$ & 1 \\ 
107 & 2 & 7 & 43 & 3 & -7 & 492307 = $107^2\times43$ & 5 & $\mathbb{Z}/5\mathbb{Z}$ & 1 \\
107 & 2 & 7 & 193 & 3 & -7 & 20651 = $107\times193$ & 5 & $\mathbb{Z}/5\mathbb{Z}$ & 1 \\
107 & 2 & 7 & 293 & 3 & -7 & 31351 = $107\times293$ & 5 & $\mathbb{Z}/5\mathbb{Z}$ & 1 \\
107 & 2 & 7 & 443 & 3 & -7 & 47401 = $107\times443$ & 5 & $\mathbb{Z}/5\mathbb{Z}$ & 1 \\
157 & 2 & 7 & 43 & 3 & -7 & 6751 = $157\times43$ & 5 & $\mathbb{Z}/5\mathbb{Z}$ & 1 \\
157 & 2 & 7 & 193 & 3 & -7 & 30301 = $157\times193$ & 5 & $\mathbb{Z}/5\mathbb{Z}$ & 1 \\
157 & 2 & 7 & 443 & 3 & -7 & 69551 = $157\times443$ & 5 & $\mathbb{Z}/5\mathbb{Z}$ & 1 \\
257 & 2 & 7 & 193 & 3 & -7 & 49601 = $257\times293$ & 5 & $\mathbb{Z}/5\mathbb{Z}$ & 1 \\
257 & 2 & 7 & 293 & 3 & -7 & 75301 = $257\times293$ & 5 & $\mathbb{Z}/5\mathbb{Z}$ & 1 \\
307 & 2 & 7 & 193 & 3 & -7 & 59251 = $307\times193$ & 5 & $\mathbb{Z}/5\mathbb{Z}$ & 1 \\
457 & 2 & 7 & 43 & 3 & -7 & 19651 = $457\times43$ & 5 & $\mathbb{Z}/5\mathbb{Z}$ & 1 \\
457 & 2 & 7 & 443 & 3 & -7 & 202451 = $457\times443$ & 5 & $\mathbb{Z}/5\mathbb{Z}$ & 1 \\
557 & 2 & 7 & 43 & 3 & -7 & 23251 = $557\times43$ & 5 & $\mathbb{Z}/5\mathbb{Z}$ & 1 \\
607 & 2 & 7 & 43 & 3 & -7 & 26101 = $607\times43$ & 5 & $\mathbb{Z}/5\mathbb{Z}$ & 1 \\
%757 & 2 & 7 & 43 & 3 & -7 & 32551 = $757\times43$ & 5 & $\mathbb{Z}/5\mathbb{Z}$ & 1 \\
%857 & 2 & 7 & 43 & 3 & -7 & 36851 = $857\times43$ & 5 & $\mathbb{Z}/5\mathbb{Z}$ & 1 \\
%907 & 2 & 7 & 43 & 3 & -7 & 39001 = $907\times43$ & 5 & $\mathbb{Z}/5\mathbb{Z}$ & 1 \\
\hline 
\end{tabular} 
%\newpage

\begin{center}
 Table 3 : $n\,=\, 5^{e}q_1 \not\equiv \, \pm1\pm7\, (\mathrm{mod}\, 25)$ with $q_1\,\equiv \,\pm7\, (\mathrm{mod}\, 25)$\\
\begin{tabular}{|c|c|c|c|c|c|c|}
\hline 
$q_1$ &  $n\,=\,5^eq_1$ & $q_1\,(\mathrm{mod}\,5)$ & $q_1\,(\mathrm{mod}\,25)$ & $h_{k,5}$ & $C_{k,5}$ & rank $(C_{k,5}^{(\sigma)})$ \\ 
\hline 
7 & 175 = $5^2\times7$ & 2 & 7 & 5 & $\mathbb{Z}/5\mathbb{Z}$ & 1 \\ 
107 & 535 = $5\times107$ & 2 & 7 & 5 & $\mathbb{Z}/5\mathbb{Z}$ & 1 \\ 
157 & 19625 = $5^3\times157$ & 2 & 7 & 5 & $\mathbb{Z}/5\mathbb{Z}$ & 1 \\  
257 & 6425 = $5^2\times257$ & 2 & 7 & 5 & $\mathbb{Z}/5\mathbb{Z}$ & 1 \\ 
307 & 38375 = $5^3\times307$ & 2 & 7 & 5 & $\mathbb{Z}/5\mathbb{Z}$ & 1 \\ 
457 & 2285 = $5\times457$ & 2 & 7 & 5 & $\mathbb{Z}/5\mathbb{Z}$ & 1 \\ 
557 & 2785 = $5\times557$ & 2 & 7 & 5 & $\mathbb{Z}/5\mathbb{Z}$ & 1 \\ 
607 & 3053 = $5\times607$ & 2 & 7 & 5 & $\mathbb{Z}/5\mathbb{Z}$ & 1 \\ 
757 & 3785 = $5\times457$ & 2 & 7 & 5 & $\mathbb{Z}/5\mathbb{Z}$ & 1 \\ 
857 & 4285 = $5\times457$ & 2 & 7 & 5 & $\mathbb{Z}/5\mathbb{Z}$ & 1 \\ 
907 & 4535 = $5\times907$ & 2 & 7 & 5 & $\mathbb{Z}/5\mathbb{Z}$ & 1 \\ 
43 & 1075 = $5^2\times43$ &3 & -7 & 5 & $\mathbb{Z}/5\mathbb{Z}$ & 1 \\ 
193 & 120625 = $5^4\times193$ & 3 & -7 & 5 & $\mathbb{Z}/5\mathbb{Z}$ & 1 \\
293 & 120625 = $5^4\times193$ & 3 & -7 & 5 & $\mathbb{Z}/5\mathbb{Z}$ & 1 \\ 
443 & 11075 = $5^2\times443$ & 3 & -7 & 5 & $\mathbb{Z}/5\mathbb{Z}$ & 1 \\ 
643 & 3215 = $5\times643$ & 3 & -7 & 5 & $\mathbb{Z}/5\mathbb{Z}$ & 1 \\ 
\hline
\end{tabular}
\end{center}
\newpage
\begin{center}
Table 4: $n\,=\, 5^{e}q_1^{2}q_2 \not \equiv \, \pm1\pm7\, (\mathrm{mod}\, 25)$ with $q_1$ or $q_2$  $\not\equiv \,\pm7\, (\mathrm{mod}\, 25)$\\
\begin{tabular}{|c|c|c|c|c|c|c|c|c|c|}
\hline 
$q_1$ & \small$q_1\,(\mathrm{mod}\,5)$ & \small$q_1\,(\mathrm{mod}\,25)$& $q_2$ & \small$q_2\,(\mathrm{mod}\,5)$ & \small$q_2\,(\mathrm{mod}\,25)$& $n\,=\,5^eq_1^2q_2$ & $h_{k,5}$ & $C_{k,5}$ & rank $(C_{k,5}^{(\sigma)})$\\ 
\hline 
2 & 2 & 2 & 3 & 3 & 3 & 60 = $5\times2^2\times3$ & 5 & $\mathbb{Z}/5\mathbb{Z}$ & 1 \\
2 & 2 & 2 & 13 & 3 & 13 & 260 = $5\times2^2\times13$ & 5 & $\mathbb{Z}/5\mathbb{Z}$ & 1 \\
2 & 2 & 2 & 53 & 3 & 3 & 1060 = $5\times2^2\times53$ & 5 & $\mathbb{Z}/5\mathbb{Z}$ & 1 \\
2 & 2 & 2 & 23 & 3 & -2 & 460 = $5\times2^2\times23$ & 5 & $\mathbb{Z}/5\mathbb{Z}$ & 1 \\
7 & 2 & 7 & 3 & 3 & 3 & 735 = $5\times7^2\times3$ & 5 & $\mathbb{Z}/5\mathbb{Z}$ & 1 \\ 
17 & 2 & 17 & 3 & 3 & 3 & 108375 = $5^3\times17^2\times3$ & 5 & $\mathbb{Z}/5\mathbb{Z}$ & 1 \\
17 & 2 & 17 & 23 & 3 & -2 & 33235 = $5\times17^2\times23$ & 5 & $\mathbb{Z}/5\mathbb{Z}$ & 1\\
37 & 2 & 17 & 3 & 3 & 3 & 20535 = $5\times37^2\times13$ & 5 & $\mathbb{Z}/5\mathbb{Z}$ & 1 \\
37 & 2 & 17 & 13 & 3 & 13 & 88985 = $5\times37^2\times13$ & 5 & $\mathbb{Z}/5\mathbb{Z}$ & 1 \\
47 & 2 & -3 & 3 & 3 & 3 & 33135 = $5\times47^2\times3$ & 5 & $\mathbb{Z}/5\mathbb{Z}$ & 1 \\
47 & 2 & -3 & 13 & 3 & 13 & 143585 = $5\times47^2\times13$ & 5 & $\mathbb{Z}/5\mathbb{Z}$ & 1 \\
47 & 2 & -3 & 23 & 3 & -2 & 254035 = $5\times47^2\times23$ & 5 & $\mathbb{Z}/5\mathbb{Z}$ & 1 \\
47 & 2 & -3 & 43 & 3 & -7 & 474935 = $5\times47^2\times43$ & 5 & $\mathbb{Z}/5\mathbb{Z}$ & 1 \\
107 & 2 & 7 & 23 & 3 & -2 & 1316635 = $5\times2^2\times3$ & 5 & $\mathbb{Z}/5\mathbb{Z}$ & 1 \\
67 & 2 & 17 & 3 & 3 & 3 & 67335 = $5\times67^2\times3$ & 5 & $\mathbb{Z}/5\mathbb{Z}$ & 1 \\
67 & 2 & 17 & 53 & 3 & 3 & 1189585 = $5\times67^2\times53$ & 5 & $\mathbb{Z}/5\mathbb{Z}$ & 1 \\
97 & 2 & -3 & 43 & 3 & -7 & 2022935 = $5\times97^2\times43$ & 5 & $\mathbb{Z}/5\mathbb{Z}$ & 1 \\
\hline 
\end{tabular}
\end{center}
\vspace{1cm}
\begin{center}
Table 5: $n\,=\, p^{e}q_1 \not \equiv \, \pm1\pm7\, (\mathrm{mod}\, 25)$ with $p\,\not\equiv \,-1\, (\mathrm{mod}\, 25)$, $q_1\,\not\equiv \,\pm7\, (\mathrm{mod}\, 25)$ 
\begin{tabular}{|c|c|c|c|c|c|c|c|c|c|}
\hline 
$p$ & \small$p\,(\mathrm{mod}\,5)$ & \small$p\,(\mathrm{mod}\,25)$& $q_1$ & \small$q_1\,(\mathrm{mod}\,5)$ & \small$q_1\,(\mathrm{mod}\,25)$& $n\,=\,p^{e}q_1$ & $h_{k,5}$ & $C_{k,5}$ & rank $(C_{k,5}^{(\sigma)})$\\ 
\hline 
59 & -1 & 9 & 2 & 2 & 2 & 118 = $59\times2$ & 25 & $(5,5)$ & 1 \\
19 & -1 & 19 & 3 & 3 & 3 & 57 = $19\times3$ & 25 & $(5,5)$ & 1 \\
59 & -1 & 9 & 23 & 3 & -2 & 1357 = $59\times23$ & 25 & $(5,5)$ & 1 \\
359 & -1 & 9 & 2 & 2 & 2 & 718 = $359\times2$ & 25 & $(5,5)$ & 1 \\
409 & -1 & 9 & 2 & 2 & 2 & 816 = $409\times2$ & 25 & $(5,5)$ & 1 \\
59 & -1 & 9 & 127 & 2 & 2 &  7493 = $59\times127$ & 25 & $(5,5)$ & 1 \\
109 & -1 & 9 & 23 & 3 & -2 & 2507 = $109\times23$ & 25 & $(5,5)$ & 1 \\
509 & -1 & 9 & 2 & 2 & 2 & 1018 = $509\times2$ & 25 & $(5,5)$ & 1 \\
709 & -1 & 9 & 2 & 2 & 2 & 1418 = $709\times2$ & 25 & $(5,5)$ & 1 \\
19 & -1 & 19 & 53 & 3 & 3 & 1007 = $19\times53$ & 25 & $(5,5)$ & 1 \\
\hline 
\end{tabular}
\end{center}
\newpage

\begin{center}
Table 6: $n\,=\,5^ep \not\equiv \, \pm1\pm7\, (\mathrm{mod}\, 25)$ with $p \, \not\equiv \, -1 (\mathrm{mod}\, 25)$\\
\hspace{1,5cm}
\begin{tabular}{|c|c|c|c|c|c|c|}
\hline 
$p$ &  $n\,=\,5^ep$ & $p\,(\mathrm{mod}\,5)$ & $p\,(\mathrm{mod}\,25)$ & $h_{k,5}$ & $C_{k,5}$ & rank $(C_{k,5}^{(\sigma)})$ \\ 
\hline 
19 & 475 = $5^2\times19$ & -1 & 19 & 25 & (5,5) & 1 \\ 
29 & 145 = $5\times29$ & -1 & 4 & 25 & (5,5) & 1 \\ 
59 & 7375 = $5^3\times59$ & -1 & 9 & 25 & (5,5) & 1 \\ 
89 & 55625 = $5^4\times89$ & -1 & 14 & 25 & (5,5) & 1 \\ 
109 & 2725 = $5^2\times109$ & -1 & 9 & 25 & (5,5) & 1 \\ 
229 & 28625 = $5^3\times229$ & -1 & 4 & 25 & (5,5) & 1 \\
239 & 1195 = $5\times239$ & -1 & 14 & 25 & (5,5) & 1 \\ 
269 & 6725 = $5^2\times19$ & -1 & 19 & 25 & (5,5) & 1 \\ 
379 & 168125 = $5^4\times379$ & -1 & 4 & 25 & (5,5) & 1 \\ 
389 & 1945 = $5^2\times389$ & -1 & 14 & 25 & (5,5) & 1 \\ 
\hline
\end{tabular}
\end{center}

%\newpage
\subsection{rank $(C_{k,5}^{(\sigma)})\, =\, 2$}
\begin{center}
Table 1: $n\,=\,5^el \not\equiv \, \pm1\pm7\, (\mathrm{mod}\, 25)$ with $l \, \not\equiv \, 1 (\mathrm{mod}\, 25)$\\
\begin{tabular}{|c|c|c|c|c|c|c|}
\hline 
$l$ &  $n\,=\,5^el$ & $l\,(\mathrm{mod}\,5)$ & $l\,(\mathrm{mod}\,25)$ & $h_{k,5}$ & $C_{k,5}$ & rank $(C_{k,5}^{(\sigma)})$ \\ 
\hline 
11 & 55 = $5\times11$ & 1 & 11 & 25 & (5,5) & 2 \\ 
41 &   5125 = $5^3\times41$ & 1 & -9 & 25 & (5,5) & 2 \\ 
61 & 5125 = $5^4\times61$ & 1 & 11 & 25 & (5,5) & 2 \\ 
71 & 1775 = $5^2\times71$ & 1 & -4 & 25 & (5,5) & 2 \\ 
131 & 655 = $5\times131$ & 1 & 6 & 25 & (5,5) & 2 \\ 
181 & 113125 = $5^4\times181$ & 1 & 6 & 25 & (5,5) & 2 \\
241 & 30125 = $5^3\times241$ & 1 & -9 & 25 & (5,5) & 2 \\ 
311 & 1555 = $5\times311$ & 1 & 11 & 25 & (5,5) & 2 \\ 
331 & 8275 = $5^2\times331$ & 1 & 6 & 25 & (5,5) & 2 \\ 
431 & 2155 = $5\times431$ & 1 & 6 & 25 & (5,5) & 2 \\ 
\hline 
\end{tabular}
\end{center}

\begin{center}
Table 2: $n\,=\, l^{e}q_1\,\equiv \, \pm1\pm7\, (\mathrm{mod}\, 25)$ with $l \, \equiv \,1\, (\mathrm{mod}\, 5)$ and $ q_1 \, \equiv \, \pm2,\pm3\,\pm7\, (\mathrm{mod}\, 25)$
\begin{tabular}{|c|c|c|c|c|c|c|c|c|c|}
\hline 
$l$ & \small$l\,(\mathrm{mod}\,5)$ & \small$l\,(\mathrm{mod}\,25)$& $q_1$ & \small$q_1\,(\mathrm{mod}\,5)$ & \small$q_1\,(\mathrm{mod}\,25)$& $n\,=\,l^{e}q_1$ & $h_{k,5}$ & $C_{k,5}$ & \small rank $(C_{k,5}^{(\sigma)})$\\ 
\hline 
31 & 1 & 6 & 2 & 2 & 2 &  $31\times2$ & 25 & $(5,5)$ & 2 \\
131 & 1 & 6 & 23 & 3 & -2 &  $131^3\times23$ & 25 & $(5,5)$ & 2 \\
181 & 1 & 6 & 47 & 2 & -3 &  $181\times47$ & 25 & $(5,5)$ & 2 \\
11 & 1 & 11 & 3 & 3 & 3 &  $11\times3$ & 25 & $(5,5)$ & 2 \\
41 & 1 & 16 & 23 & 3 & -2 &  $41\times23$ & 25 & $(5,5)$ & 2 \\
191 & 1 & 16 & 2 & 2 & 2 &  $191\times2$ & 25 & $(5,5)$ & 2 \\
41 & 1 & 16 & 47 & 2 & -3 &  $41^2\times47$ & 25 & $(5,5)$ & 2 \\
311 & 1 & 11 & 2 & 2 & 2 &  $311^4\times2$ & 25 & $(5,5)$ & 2 \\
\hline 
\end{tabular} 
\end{center}
\newpage
\begin{center}
Table 3: $n\,=\,l^{e} \equiv \, \pm1\pm7\, (\mathrm{mod}\, 25)$ with $l \, \equiv \, 1 (\mathrm{mod}\, 25)$ 
\begin{tabular}{|c|c|c|c|c|c|c|}
\hline 
$l$ &  $n\,=\,l^{e}$ & $l\,(\mathrm{mod}\,5)$ & $l\,(\mathrm{mod}\,25)$ & $h_{k,5}$ & $C_{k,5}$ & rank $(C_{k,5}^{(\sigma)})$ \\ 
\hline 
151 & $151$  & 1 & 1 & 25 & (5,5) & 2 \\ 
251 &   $251^2$  & 1 & 1 & 25 & (5,5) & 2 \\ 
601 & $601^3$ & 1 & 1 & 25 & (5,5) & 2 \\ 
1051 & $1051^4$ & 1 & 1 & 25 & (5,5) & 2 \\ 
1301 & $1301$ & 1 & 1 & 25 & (5,5) & 2 \\ 
1451 & $1451^2$ & 1 & 1 & 25 & (5,5) & 2 \\
1801 & $1801^3$ & 1 & 1 & 25 & (5,5) & 2 \\ 
1901 & $1901^4$ & 1 & 1 & 25 & (5,5) & 2 \\ 
2111 & 2111 & 1 & 1 & 25 & (5,5) & 2 \\ 
2131 & $2131^2$ & 1 & 1 & 25 & (5,5) & 2 \\ 
\hline 
\end{tabular} 
\end{center}

\section{Conjecture}
In this article, we have classified some pure quintic fields $\mathbb{Q}(\sqrt[5]{n})$, more precisely, we focused on the ones whose normal closures $\mathbb{Q}(\sqrt[5]{n},\zeta_5)$ possesses a $5$-class groups of type $(5,5)$, by treating the rank of the ambiguous classes, that can be characterized by the radicand $n$.\\
As to provide numerical examples, we use the system \textbf{PARI/GP} \ref{PRI}. Thus, we have noticed that the done calculations for some $n$ forms, show that $5$-class groupe $C_{k,5}$ of the field $k$, is isomorphic to $\mathbb{Z}/5\mathbb{Z}$, which allows us to give this conjecture as follows:\\
\begin{conjecture}
Let $\Gamma\,=\,\mathbb{Q}(\sqrt[5]{n})$ be a pure quintic field, where $n$ is a positive integer, $5^{th}$ power-free. Let $k\,= \Gamma(\zeta_5)$ be the normal closure of $\Gamma$. Denote by $C_{k,5}$ the 5-class group of $k$, $q_1,q_2\,\equiv \, \pm2\, (\mathrm{mod}\, 5)$ are primes and $e \in \{1,2,3,4\}$.\\
 If the radicand $n$ take one form as follows:
\begin{equation}
 n\,=\,
   \begin{cases}
  
  q_1^{e_1}q_2\,\equiv \, \pm1\pm7\, (\mathrm{mod}\, 25) & \text{ with } \quad q_i\,\equiv \,\pm7\, (\mathrm{mod}\, 25)\\
   
   5^{e}q_1 \not\equiv \, \pm1\pm7\, (\mathrm{mod}\, 25)& \text{ with } \quad  q_1\,\equiv \,\pm7\, (\mathrm{mod}\, 25)\\
   
  5^{e}q_1^{2}q_2 \not \equiv \, \pm1\pm7\, (\mathrm{mod}\, 25) & \text{ with } \quad q_1 \,\text{ or }\, q_2$  $\not\equiv \,\pm7\, (\mathrm{mod}\, 25)\\
   
   \end{cases}
\end{equation}
Then $C_{k,5}$ is a cyclic groupe of order $5$.
\end{conjecture}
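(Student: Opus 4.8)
The strategy is to establish $h_{k,5}=5$ by proving the two bounds $h_{k,5}\ge 5$ and $h_{k,5}\le 5$ separately. The lower bound is immediate: in each of the three families the invariants $d$ and $q^{*}$ are forced by the stated congruences alone — one finds $d=2,\ q^{*}=2$ for the first two families and $d=3,\ q^{*}=1$ for the third, exactly as in the case analysis carried out in the proof of Theorem \ref{thm:Rank1} — so Chevalley's ambiguous class number formula $\operatorname{rank}(C_{k,5}^{(\sigma)})=d-3+q^{*}$ gives $\operatorname{rank}(C_{k,5}^{(\sigma)})=1$ \emph{unconditionally}, whence $C_{k,5}^{(\sigma)}$ is cyclic of order $5$ and $5\mid h_{k,5}$. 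It therefore remains to prove $h_{k,5}\le 5$, equivalently $C_{k,5}=C_{k,5}^{(\sigma)}$, equivalently $C_{k,5}^{1-\sigma}=\{1\}$, equivalently $(k/k_{0})^{*}=k_{5}^{(1)}$.

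For the upper bound I would first constrain the module-theoretic shape of $C_{k,5}$. Regard it as a module over $\mathbb{Z}_{5}[\operatorname{Gal}(k/k_{0})]\cong\mathbb{Z}_{5}[\sigma]/(\sigma^{5}-1)$, a local ring with maximal ideal $\mathfrak{m}=(5,\sigma-1)$ and residue field $\mathbb{F}_{5}$. Since $C_{k,5}^{(\sigma)}$ is elementary abelian, $C_{k,5}[\mathfrak{m}]=C_{k,5}^{(\sigma)}$ has order $5$, so $C_{k,5}$ has simple socle (it is co-cyclic). On the other hand complex conjugation $\tau^{2}$ acts with order prime to $5$, giving $C_{k,5}=C_{k,5}^{+}\oplus C_{k,5}^{-}$ with $C_{k,5}^{+}\cong C_{\Gamma,5}$ by [\ref{Mani}, Lemma 6.2]; simplicity of the socle forces exactly one of the two summands to be non-trivial. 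It then remains to show that the non-trivial summand has order exactly $5$: for then $|C_{k,5}|=5$, $C_{k,5}$ is cyclic, $\sigma$ acts trivially on it (its order $5$ is prime to $|\operatorname{Aut}(\mathbb{Z}/5\mathbb{Z})|$), and so $C_{k,5}=C_{k,5}^{(\sigma)}$, which is the assertion. The plus part $C_{\Gamma,5}$ should be disposed of by the classical theory of pure quintic fields: no prime factor of $n$ is $\equiv -1\pmod 5$, so Kobayashi's criterion does not force $5\mid h_{\Gamma}$, and a direct Rédei/ambiguous-class count over the conductor dividing $5^{5}n^{4}$, together with the norm behaviour of the relevant units — governed precisely by $q_{i}\bmod 25$ — should yield $C_{\Gamma,5}=1$; one would then attack the minus part $C_{k,5}^{-}$ through Parry's class-number relation [\ref{Pa}] between $h_{\Gamma}$ and $h_{k}$ and the explicit shape of $(k/k_{0})^{*}$ from [\ref{Mani}, proposition 5.8].

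The main obstacle is ruling out, for the minus part, that $C_{k,5}$ has order $25$. A co-cyclic $\mathbb{Z}_{5}[\sigma]$-module of order $25$ with one-dimensional $\sigma$-fixed space does exist — for instance $\mathbb{Z}/25\mathbb{Z}$ with $\sigma$ acting by multiplication by $6$, or $\mathbb{Z}_{5}[\zeta_{5}]/(\lambda^{2})$ with $\sigma$ acting by $\zeta_{5}$ — and in Hilbert class field terms this corresponds to $k_{5}^{(1)}/k_{0}$ having Galois group a Heisenberg-type group $C_{5}\ltimes(C_{5}\times C_{5})$, or $C_{25}$ lying over $C_{5}$. Nothing established so far excludes this: the relative genus field $(k/k_{0})^{*}$ already exhausts the maximal subextension abelian over $k_{0}$ (it has degree $5$ over $k$), so no genus argument over $k_{0}$ forbids an extra non-abelian layer inside $k_{5}^{(1)}$. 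Eliminating it seems to require a genuinely finer ingredient — a capitulation argument showing that a non-ambiguous ideal class would be forced to become principal where it cannot, or one further layer of genus theory (the Tate cohomology $\widehat{H}^{0}(E_{k})$ and $\widehat{H}^{-1}(E_{k})$ of the unit group under $\langle\sigma\rangle$, or unramified $5$-extensions of $k_{5}^{(1)}$), or an analytic bound pinning $h_{k}$ down to $5$. The absence of such an argument, in the face of the uniform numerical evidence in the tables, is exactly why we record this statement only as a conjecture.
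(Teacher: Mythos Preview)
The paper gives no proof of this statement whatsoever: it is recorded purely as a conjecture, supported only by the PARI/GP computations in Tables~2--4 of \S3.1. There is no theoretical argument in the paper beyond the observation, made in passing during the proof of Theorem~\ref{thm:Rank1}, that for each of these three shapes of $n$ the computer consistently returned $C_{k,5}\cong\mathbb{Z}/5\mathbb{Z}$ rather than the type $(5,5)$ under consideration there.

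Your proposal is therefore not in conflict with any proof in the paper, and your closing sentence matches the paper's own position exactly. In fact your discussion goes considerably further than the paper does: you isolate the lower bound $5\mid h_{k,5}$ as a consequence of the ambiguous class number formula, you recast the missing upper bound as a question about the $\mathbb{Z}_5[\sigma]$-module structure of $C_{k,5}$ and its $\pm$-decomposition under $\tau^2$, and you identify concretely which module shapes (co-cyclic of order $25$ with one-dimensional socle) would have to be excluded and why genus theory over $k_0$ alone cannot exclude them. None of this analysis appears in the paper, which rests entirely on the numerics.

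One caution on your lower bound: asserting that $q^{*}=2$ is \emph{forced} for the first two families requires not only that $\zeta_5$ be a norm --- which the congruences $q_i\equiv\pm7\pmod{25}$ do guarantee via [\ref{Mani}, Lemma~5.1] --- but also that $1+\zeta_5$ be a norm. The paper never verifies this second condition independently; in the proof of Theorem~\ref{thm:Rank1} these families appear only inside the case hypothesis $q^{*}=2$, not as a consequence of it. If $1+\zeta_5$ failed to be a norm for some such $n$ one would have $q^{*}=1$, hence $\operatorname{rank}(C_{k,5}^{(\sigma)})=0$ and $C_{k,5}$ trivial, so the conjecture would already be false at that $n$. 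Your lower bound is thus plausible but leans on a local computation not carried out in the paper itself.
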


%\section*{\Large References}

Fouad ELMOUHIB\\
Department of Mathematics and Computer Sciences,\\
Mohammed 1st University,\\
Oujda - Morocco,\\
fouad.cd@gmail.com.\\\\
%\vspace{1cm}

Mohamed TALBI\\
Regional Center of Professions of Education and Training in the Oriental,\\
Oujda - Morocco,\\
ksirat1971@gmail.com.\\\\
%\vspace{1cm}

Abdelmalek AZIZI\\
Department of Mathematics and Computer Sciences,\\
Mohammed 1st University,\\
Oujda - Morocco,\\
abdelmalekazizi@yahoo.fr.
\end{document}